\documentclass{article}
\usepackage[british]{babel}
\usepackage[utf8]{inputenc}
\usepackage{xcolor}
\usepackage{caption,subcaption,longtable}
\usepackage{graphics, setspace}
\usepackage{mathtools}
\usepackage{array,multirow}
\usepackage{tikz,float}
\usepackage{amsmath,amsthm,amssymb}
\usepackage{enumerate,enumitem,listings}
\usepackage[T1]{fontenc}
\usepackage{stackengine,scalerel}
\usepackage{hyphenat}
\usepackage{slantsc}
\usepackage[hang, flushmargin]{footmisc}
\usepackage{hyperref}
\usepackage{footnotebackref}
\allowdisplaybreaks

\makeatletter
\newtheorem*{rep@theorem}{\rep@title}
\newcommand{\newreptheorem}[2]{%
\newenvironment{rep#1}[1]{%
 \def\rep@title{#2 \ref{##1}}%
 \begin{rep@theorem}}%
 {\end{rep@theorem}}}
\makeatother
\makeatletter
\newtheorem*{rep@lemma}{\rep@title}
\newcommand{\newreplemma}[2]{%
\newenvironment{rep#1}[1]{%
 \def\rep@title{#2 \ref{##1}}%
 \begin{rep@lemma}}%
 {\end{rep@lemma}}}
\makeatother

\newtheorem{theoremalph}{Theorem}

\newtheorem{coralph}[theoremalph]{Corollary}
\newtheorem{theorem}{Theorem}[section]
\newtheorem{lemma}[theorem]{Lemma}
\newreptheorem{theorem}{Theorem}
\newreplemma{lemma}{Lemma}
\theoremstyle{definition}
\newtheorem{definition}[theorem]{Definition}
\newtheorem{remark}[theorem]{Remark}

\setlist[enumerate]{align=left}
\allowdisplaybreaks
\title{Random groups do not have Property (T) at densities below $1\slash 4$}
\author{Calum J. Ashcroft}
\date{\vspace{-2em}}


\begin{document}
\maketitle
	\begin{abstract}
We prove that random groups in the Gromov density model at density $d<1\slash 4$ do not have Property (T), answering a conjecture of Przytycki. We also prove similar results in the $k$-angular model of random groups.
	\end{abstract}


\section{Introduction}

The \emph{density model} of random groups was introduced by Gromov in \cite{gromovasymptotic} to study a `typical' group. Fix $n\geq 2, l\geq 1$, and $0<d<1$. Let $F_{n} $ be the free group of rank $n$. A \emph{random group on $n$ generators at density $d$ and length $l$} is defined by killing a uniformly random chosen set of $(2n-1)^{ld}$ cyclically reduced words of length $l$ in $F_{n}$. We write $G\sim G(n,l,d)$ if a random group has the distribution of this procedure, and \emph{with overwhelming probability} (\emph{w.o.p.}) if a property holds with probability tending to $1$ as $l$ tends to infinity. 

The study of Property (T) in random quotients was proposed in Gromov's manuscript on hyperbolic groups \cite[$\S$4.5C]{Gromov_hyperbolic}. A famed theorem of \.{Z}uk states that w.o.p a random group at density $d>1\slash 3$ has Property (T) \cite{Zuk} (see also \cite{kotowski,ashcroft2021property}). \.{Z}uk's theorem was recently extended to random quotients of hyperbolic groups in \cite{ashcroft2022propertyTrandomquotients}, answering a question of Gromov--Ollivier. 

Random groups at density less than $1\slash 6$ w.o.p. act freely and cocompactly on a CAT(0) cube complex \cite{Ollivier-Wise} (they are hyperbolic w.o.p. \cite{gromovasymptotic,olliviersharp} and hence linear w.o.p. \cite{Haglund-Wise,Agol13}). Montee proved that random groups w.o.p. act non-trivially and cocompactly on a CAT(0) cube complex at density less than $3\slash 14$ \cite{montee2021random}, extending the previous density bounds for such an action of Ollivier--Wise ($d<1\slash 5$, \cite{Ollivier-Wise}) and Mackay--Przytycki ($d<5\slash 24$, \cite{mackay_przytycki2015balanced}).

A gap exists between the current known density bound for Property (T) and for an unbounded action on a CAT(0) cube complex. It has been conjectured by Przytycki\footnote{This conjecture was personally communicated to the author by Przytycki and has been strongly alluded to in, for example, \cite[p. 398]{mackay_przytycki2015balanced}.} that random groups at densities below $1\slash 4$ do not have Property (T). We prove this conjecture.
\begin{theoremalph}\label{thmalph: no property t in random groups}
Let $G$ be a random group at density less than $1\slash 4$. Then, with overwhelming probability, $G$ acts with unbounded orbits on a finite dimensional CAT(0) cube complex, and hence does not have Property (T).
\end{theoremalph}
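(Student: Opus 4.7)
The plan is to extend the hypergraph/wall framework of Ollivier--Wise \cite{Ollivier-Wise}, Mackay--Przytycki \cite{mackay_przytycki2015balanced}, and Montee \cite{montee2021random} all the way to the sharp threshold $d = 1/4$. Concretely, I would construct, with overwhelming probability, a family of codimension-$1$ hypergraphs in the Cayley $2$-complex $\widetilde X$ of $G$; Sageev's construction then produces a finite dimensional CAT(0) cube complex $X$ on which $G$ acts, and one must ensure the action has an unbounded orbit. Since any action of a Property (T) group on a CAT(0) cube complex has a global fixed point, this yields the theorem.

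The walls will come from a hypergraph construction: in each $2$-cell one selects a pairing of the boundary edge-midpoints and joins paired midpoints by arcs, and the arcs glue across edges shared by two $2$-cells to produce graphs $\Lambda_i \subset \widetilde X$. I must show that each $\Lambda_i$ is an \emph{embedded tree} and that at least one carries an element of $G$ acting hyperbolically. At $d < 1/6$ the antipodal pairing suffices, but at larger densities one needs a smarter, possibly relator-dependent, pairing in each face. The key geometric input is Ollivier's linear isoperimetric inequality: any reduced disc diagram with $n$ faces has boundary length at least $(1-2d)ln$. For $d < 1/4$ this gives boundary length strictly greater than $ln/2$, which is precisely the slack one needs to rule out hypergraph-bounded discs, since a single antipodal arc inside a face ``consumes'' exactly half of that face's boundary.

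The main probabilistic input is that for $d < 1/4$, with overwhelming probability no two relators in the presentation share a common subword of length $\geq l/2$; more generally, the expected number of configurations of shared sub-arcs across several faces violating the desired hypergraph behaviour tends to zero. The main obstacle, I expect, is controlling the elaborate multi-face configurations --- collared bigons, ladders, and multi-face discs with long shared boundary segments --- which are precisely the obstructions the constructions at $d < 3/14$ were designed to avoid. At the threshold $d = 1/4$, degenerate configurations with shared subwords of length close to $l/2$ become borderline, and the pairing rule in each face must be chosen to respect the local pattern of shared subwords with neighbouring faces, with bad configurations eliminated by a careful first- and second-moment calculation over reduced diagrams. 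Once the hypergraphs are embedded trees, a standard argument (as in \cite{Ollivier-Wise}) shows that some $g \in G$ acts hyperbolically on a hypergraph, producing an unbounded $G$-orbit on $X$ and hence the failure of Property (T).
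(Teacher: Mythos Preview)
Your proposal follows the hypergraph route of Ollivier--Wise, Mackay--Przytycki, and Montee, which the paper explicitly abandons. The paper's proof runs through a completely different mechanism: it introduces the notion of an \emph{aspherically relator separated} presentation (split each relator $t_i$ into halves $r_i r_i'$ and require both $\langle S\mid T\rangle$ and $\mathcal{H}(G)=\langle S\mid R\rangle$ to be aspherical with the latter infinite), then builds a specific $(\mathbb{Z}/2\mathbb{Z})^{I}$-cover $\hat{\Delta}_{T,R}$ of the Cayley graph of $K_R=\langle S\mid R\rangle$ in the style of Osajda's group cubization and locates walls there via homology mod $2$. The probabilistic step is then soft: one only needs that the halved presentation, which lives at density $2d<1/2$, is w.o.p.\ infinite and diagrammatically reducible, and this follows from Ollivier's results with mild bookkeeping (Lemmas~\ref{lem: equivalence of models}--\ref{lem: halved group is also random}). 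No hypergraph-in-the-Cayley-complex analysis occurs anywhere.

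Your proposal, by contrast, is a research plan rather than a proof, and the gap sits precisely where the difficulty is known to concentrate. You correctly note that the isoperimetric inequality gives boundary length $>ln/2$ when $d<1/4$, and that w.o.p.\ no two relators share a piece of length $\geq l/2$ (this is $C'(2d)$). But both facts were already available to Montee, who reached only $d<3/14$. The obstruction is not the two-face pictures but the multi-face ones --- ladders, collared diagrams, and their iterates --- where a hypergraph can fail to embed even though every individual piece is short. You acknowledge this and then offer to resolve it by ``a pairing rule chosen to respect the local pattern of shared subwords'' and ``a careful first- and second-moment calculation,'' but you supply neither the rule nor the calculation. The papers you cite advanced from $1/5$ to $5/24$ to $3/14$ by increasingly delicate pairing schemes and diagram enumerations; closing the gap to $1/4$ along those lines would require a genuinely new combinatorial idea, not a refinement, and your proposal does not contain one. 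The paper's introduction says exactly this: ``As $d$ tends to $1/4$ these methods present increasing combinatorial difficulties,'' which is why it switches to the cover-based argument.
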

However, the question of what happens between densities $1\slash 4$ and $1\slash 3$ remains. Indeed, random groups at density $d<1\slash 6$ w.o.p. have the Haagerup property \cite[Corollary $9.2$]{Ollivier-Wise}, a strong negation of Property (T). There are therefore two questions to answer. Firstly, what is the optimum density bound for random groups to have the Haagerup property? And secondly, what is the optimum density bound for random groups to have Property (T)? More concretely, suppose that $0<d_{H}\leq d_{(T)}<1$ are optimal constants such that a random group at density $d<d_{H}$ w.o.p. has the Haagerup property and a random group at density $d>d_{(T)}$ w.o.p. has Property (T). Does $d_{H}=d_{(T)}$? This would meant that the Haagerup property and Property (T) would be `generically opposite', as discussed in \cite[$\S IV.c$]{Ollivier_Jan_Invitation}. Beyond Property (T), there are results on further fixed point properties in the density model, such as $F(L^{p})$ \cite{DRUTU_Mackay}, or actions on uniformly negatively curved Banach spaces \cite{Oppenheim21} (see also \cite{de2021banach} for similar results in the \emph{triangular model}). Questions then remain for the optimum density bounds for these fixed point properties.

  Our approach to the problem is different to the previous methods used to obstruct Property (T) in random groups. The results of Ollivier--Wise, Mackay--Przytycki, and Montee were proved by building separating subspaces of the Cayley complex of a random group, and then employing the cubulation techniques of Sageev \cite{Sageev-95}. These subspaces were constructed inductively by joining midpoints of edges to form `hypergraphs', using the isoperimetric inequality for random groups (see \cite{ollivier2007some,odrzygozdzsquare}) to prove that these hypergraphs are $2$-sided trees. As $d$ tends to $1\slash 4$ these methods present increasing combinatorial difficulties. We instead approach the problem similarly to Osajda's work on group cubization \cite{osajda2018group}, using covers of Cayley graphs to obstruct Property (T) in certain extensions of groups. 
  
 In Definition \ref{def: aspherically separated}, we introduce the notion of a group being \emph{aspherically relator separated}. It is well known that a group acting with unbounded orbits on a finite dimensional CAT(0) cube complex does not have Property (T) \cite[Theorem B]{Niblo-Reeves97}. The main technical result of this paper is Theorem \ref{thmalph: well separated implies no (T)}, the proof of which occupies Section \ref{sec: Using covers of graphs to obstruct (T)}. 
\begin{theoremalph}\label{thmalph: well separated implies no (T)}
Let $G$ be an aspherically relator separated group. Then $G$ acts with unbounded orbits on a finite dimensional CAT(0) cube complex, and hence does not have Property (T).
\end{theoremalph}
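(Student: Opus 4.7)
The plan is to extract a $G$-equivariant wallspace structure on the Cayley graph of $G$ from the separation data provided by the hypothesis, apply Sageev's construction to cubulate, and then verify the two conditions required to invoke Niblo--Reeves: that the resulting CAT(0) cube complex is finite dimensional, and that $G$ acts on it with unbounded orbits.

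First I would unpack Definition~\ref{def: aspherically separated} to obtain, for each distinguished piece of the presentation, a cover of an appropriate subcomplex of the Cayley $2$-complex in which two specified copies of a subpath are separated. Following the covering-space approach of Osajda's group cubization \cite{osajda2018group}, I would glue these local covers together $G$-equivariantly to produce a cover of the Cayley graph carrying a natural system of walls: each wall is the preimage of the midpoint of a specified edge, and the asphericity hypothesis should enter to guarantee that these walls are embedded and two-sided, rather than bounding a disc or self-intersecting. Pushing the walls down via the covering projection then yields a $G$-invariant wallspace structure on $G$ itself.

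With the wallspace in hand, Sageev's construction produces a CAT(0) cube complex $X$ on which $G$ acts by automorphisms. Finite dimensionality should follow from a uniform bound on the size of any pairwise crossing family of walls: since the walls arise from a bounded local pattern of covers, only finitely many can pairwise cross inside a fixed compact region. For unboundedness of orbits, I would exhibit, for each $n$, a wall separating the identity from some group element at distance at least $n$; this reduces to showing that the walls in the cover are properly embedded, so that every geodesic in the Cayley graph is crossed by arbitrarily many walls.

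The principal obstacle, as with the earlier cubulation results of Ollivier--Wise, Mackay--Przytycki, and Montee, will be proving that the walls are genuinely two-sided embedded hypergraphs and that they separate arbitrarily distant pairs of points. In those works the required control came from isoperimetric inequalities on spherical van Kampen diagrams; in the present axiomatic framework the corresponding control should be extracted directly from the hypothesis of aspherical relator separation, which one expects to be designed precisely to supply it. Once the wallspace is shown to have these two properties, $G$ acting with unbounded orbits on the finite-dimensional CAT(0) cube complex $X$ cannot have Property (T) by \cite[Theorem B]{Niblo-Reeves97}, which completes the proof.
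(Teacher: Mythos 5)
Your high-level skeleton (covers in the spirit of Osajda, a wallspace, the Sageev/Hruska--Wise dual cube complex, finite dimension from a bound on pairwise-crossing families, and Niblo--Reeves to rule out Property (T)) matches the paper, but the proposal is missing the mechanism that actually produces the walls, and the one concrete construction you name is the wrong one. You describe the walls as ``the preimage of the midpoint of a specified edge'', i.e.\ the hypergraphs of Ollivier--Wise; the paper explicitly avoids that route. What actually happens is this: from $G=\langle S\mid t_1,\dots,t_N\rangle$ with each relator split as $t_i=r_ir_i'$, one forms the quotient $\mathcal{H}(G)=K_R=\langle S\mid r_1,r_1',\dots\rangle$, takes the cover $\hat{\Delta}_{T,R}$ of the Cayley graph $\Sigma_R$ of $K_R$ corresponding to the kernel of $\pi_1(\Sigma_R)=\langle\langle R\rangle\rangle\twoheadrightarrow H_{1}(\langle\langle R\rangle\rangle)\slash\iota(H_{1}(\langle\langle T\rangle\rangle))$ (homology with $\mathbb{Z}\slash 2\mathbb{Z}$ coefficients), and the cutsets are the preimages $\pi^{-1}(k\sigma_r\cup k\sigma_{r'})$ of the pairs of loops read by the two halves of a relator of $G$ (these are loops in $\Sigma_R$ precisely because the halves are relators of $K_R$). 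The group that acts is the intermediate quotient $\hat{H}_{T,R}$ sitting between $G$ and $K_R$, and $G$ acts through its surjection onto $\hat{H}_{T,R}$; nothing is built on the Cayley graph of $G$ itself.

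Second, you defer the essential point --- that these sets separate, and that arbitrarily distant points are separated by many walls --- to the hypothesis, saying the definition is ``designed precisely to supply it''. That is circular: aspherical relator separation only asserts that the presentations of $G$ and $\mathcal{H}(G)$ are aspherical and that $\mathcal{H}(G)$ is infinite; it hands you no separation data by fiat. The real work uses Chiswell's theorem that an aspherical presentation has free relation module, which identifies $H_1(\langle\langle R\rangle\rangle)$ with $\bigoplus_{\kappa\in K_R}\bigoplus_{\rho\in R}\mathbb{Z}\slash 2\mathbb{Z}$, shows that passing to $Q(T,R)$ merely glues $\kappa\overline{r}$ to $\kappa\overline{r'}$ in pairs, and hence yields a bilinear form against which any path crossing exactly one lift of one loop of a cutset pairs to $1$, contradicting membership in $\ker P_{T,R}$. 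Asphericity also guarantees (no proper powers, no cyclic conjugates among relators, unique factorisations $t=rr'$) that the pairing $r\leftrightarrow r'$ is well defined --- not, as you suggest, that walls avoid ``bounding a disc''. Unboundedness of orbits is then obtained by exhibiting explicit paths crossing $n$ pairwise disjoint translates of the cutsets, not by showing every geodesic crosses many walls. Without the homological input the proposal contains no proof that the walls are walls, so as written there is a genuine gap rather than an alternative argument.
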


We may therefore deduce Theorem \ref{thmalph: no property t in random groups} from Theorem \ref{thmalph: well separated implies no (T)} and the following lemma, the proof of which forms Section \ref{sec: Random groups are aspherically relator separated}.

\begin{replemma}{lem: random groups are well separated}
Let $G$ be a random group at density less than $1\slash 4.$ Then, with overwhelming probability, $G$ is aspherically relator separated. 
\end{replemma}
We study a further model of random groups that was introduced in \cite{ashcroftrandom}. Fix $k\geq 2$ and $0<d<1.$ A \emph{random group in the $k$-angular model at density $d$}
is defined by choosing $G\sim G(n,k,d)$ and letting $n$ tend to infinity. This was preceded by the \emph{triangular model} of \.{Z}uk \cite{Zuk} and the \emph{square model} of Odrzyg{\'o}{\'z}d{\'z} \cite{odrzygozdzsquare}. The \emph{positive $k$-angular model}, $G^{+}(n,k,d)$, is obtained similarly, by choosing $n^{kd}$ positive words (words containing no inverse letters) of length $k$ in $F_{n}$. For any fixed $m$ and $k$, a random group $G\sim G^{+}(n,k,d)$ admits a homomorphism onto a finite index subgroup of a random group $H\sim G(m,nk,d)$ \cite[$\S 3.3$]{kotowski}. Since Property (T) is preserved by passing to quotients and finite index extensions, we obtain the following corollary (writing $w.o.p.(n)$ to mean that a property holds with probability tending to $1$ as $n$ tends to infinity).
\begin{coralph}
Let $G$ be a random group in the positive $k$-angular model at density $d<1\slash 4$. Then, w.o.p.(n), $G$ does not have Property (T).
\end{coralph}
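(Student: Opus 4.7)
The plan is to combine Theorem~\ref{thmalph: no property t in random groups} with the coupling between the two random models described in the paragraph preceding the statement. The key input, taken from \cite[$\S 3.3$]{kotowski}, is that for any fixed $m \geq 2$, a random group $G \sim G^{+}(n,k,d)$ comes equipped with a surjective homomorphism $\pi : G \twoheadrightarrow H_{0}$, where $H_{0}$ is a finite index subgroup of some $H \sim G(m, nk, d)$. Crucially, the construction couples $G$ and $H$ on a common probability space.

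I would then run the following contrapositive chain. Applying Theorem~\ref{thmalph: no property t in random groups} to $H$ at density $d < 1/4$ and length $l = nk$ gives, w.o.p. in $l$---equivalently w.o.p.(n), since $k$ is fixed---an action of $H$ with unbounded orbits on a finite dimensional CAT(0) cube complex; in particular $H$ does not have Property (T). Since Property (T) is inherited by finite-index overgroups, $H_{0}$ must also fail (T) w.o.p.(n). Since Property (T) descends to quotients and $\pi$ is surjective, $G$ itself lacks Property (T) w.o.p.(n).

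The statement is thus a formal consequence of Theorem~\ref{thmalph: no property t in random groups} once the model comparison is invoked, and there is no substantial mathematical obstacle beyond it. The one point that warrants a sentence of care is that the w.o.p. conclusion of Theorem~\ref{thmalph: no property t in random groups} applied to $H$ transfers to a w.o.p. conclusion for $G$; this is automatic from the coupling of \cite{kotowski}, which realises both groups on a common probability space, so the intersection of the two events (existence of the surjection $\pi$ and $H$ failing (T)) is still a w.o.p.(n) event.
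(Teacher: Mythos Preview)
Your proposal is correct and follows exactly the approach the paper indicates: the corollary is stated immediately after the sentence ``Since Property (T) is preserved by passing to quotients and finite index extensions, we obtain the following corollary,'' and your argument is just an explicit unpacking of that sentence together with the model comparison from \cite[\S 3.3]{kotowski}. The only addition on your side is the remark about the coupling ensuring the w.o.p.\ transfer, which is a fair point to make explicit but is implicit in the paper's formulation.
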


It was recently shown that random groups in the $k$-angular model at density $d>(k+(-k\mod 3))\slash 3k$  have Property (T) w.o.p.(n) \cite{ashcroft2021property}, extending the known results of: $d>1\slash 3$ for the triangular model \cite{Zuk}; $d>1\slash 3$ for the hexagonal model \cite{odrzygozdz2019bent}; and in fact $d>1\slash 3$ for any $k$-angular model where $k$ is divisible by $3$ \cite{montee(T)}. Random groups do not have Property (T) w.o.p.(n) at densities: $d<1\slash 3$ in the triangular model \cite{Zuk}; $d<3\slash 8$ in the square model \cite{odrzygozdz2019bent}; and $d<1\slash 3$ in the hexagonal model \cite{odrzygozdz2019bent}. Random groups in the square model are in fact w.o.p.(n) virtually special (in the sense of Haglund--Wise) at density $d<1\slash 3$ \cite{duong}. We prove the following result for the $k$-angular model. 
\begin{theoremalph}\label{thmalph: lack of (T) in $k$-angular model}
Let $k\geq 2$ and $d<\lfloor k\slash 2\rfloor\slash2k.$ Let $G$ be a random group in the $k$-angular model at density $d$. Then, w.o.p.(n), $G$ acts with unbounded orbits on a finite dimensional CAT(0) cube complex, and hence does not have Property (T).
\end{theoremalph}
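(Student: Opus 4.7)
The plan is to deduce Theorem~C in exact parallel with Theorem~A, by combining Theorem~B with a $k$-angular analog of Lemma~\ref{lem: random groups are well separated}. Thus it suffices to establish the following statement: a random group $G\sim G(n,k,d)$ with $d < \lfloor k/2\rfloor/(2k)$ is, w.o.p.($n$), aspherically relator separated. Once this is in hand, Theorem~B applied to $G$ produces an action with unbounded orbits on a finite dimensional CAT(0) cube complex, which in turn rules out Property (T).

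To obtain the $k$-angular analog of Lemma~\ref{lem: random groups are well separated}, I would follow the blueprint of Section~\ref{sec: Random groups are aspherically relator separated}, substituting the Gromov-model isoperimetric estimates with the corresponding first moment computations in the $k$-angular model. There are $n^{kd}$ relators, each of length exactly $k$, drawn uniformly from cyclically reduced words in $F_{n}$. For a bounded configuration involving $r$ relators and a combined total of $s$ freely chosen letters, the expected number of occurrences in $G\sim G(n,k,d)$ is of order at most $n^{rkd-s}$, which tends to zero as $n\to\infty$ whenever $d < s/(rk)$.

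The threshold $\lfloor k/2\rfloor/(2k)$ then arises from the dominant obstruction, which is a pair of distinct relators that share a common subword of length $\lfloor k/2\rfloor$. The expected number of such overlapping pairs is of order $n^{2kd - \lfloor k/2\rfloor}$, which tends to zero precisely when $d < \lfloor k/2\rfloor/(2k)$. Larger bad configurations appearing in the definition of aspherical relator separation each contribute strictly smaller first moments under the same density bound (a standard feature of the counting: adding a relator costs $kd$ in the exponent while exposing at least $\lfloor k/2\rfloor$ new letters on any new overlap). A union bound over the finitely many bad configuration types, followed by Markov's inequality, then shows that w.o.p.($n$) none of them occur, so that $G$ is aspherically relator separated.

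The main subtlety, and what makes this genuinely an adaptation rather than a copy of the proof of Lemma~\ref{lem: random groups are well separated}, is the discreteness of the $k$-angular model: relator lengths are locked at $k$, overlap lengths are integers in $\{0,1,\dots,k\}$, and the optimal threshold becomes $\lfloor k/2\rfloor/(2k)$ rather than the continuous value $1/4$ obtained in the Gromov model (matching $1/4$ when $k$ is even and dropping to $(k-1)/(4k)$ when $k$ is odd). Verifying that every configuration appearing in the definition of aspherical relator separation is controlled by exactly this integer threshold, and that no additional configuration type tightens it further, is the principal technical point.
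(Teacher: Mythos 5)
Your overall architecture matches the paper's: reduce Theorem \ref{thmalph: lack of (T) in $k$-angular model} to showing that $G\sim G(n,k,d)$ is w.o.p.(n) aspherically relator separated, then apply Theorem \ref{thmalph: well separated implies no (T)}; and your heuristic does locate the correct threshold, since $d<\lfloor k\slash 2\rfloor\slash 2k$ is exactly the condition for the halved presentation $\mathcal{H}(G)$, with its $2n^{kd}$ relators of length roughly $\lfloor k\slash2\rfloor$, to sit at density below $1\slash 2$. The gap is in how you propose to verify asphericity. Asphericity of $\mathcal{H}(G)$ (and its infiniteness, which is also part of Definition \ref{def: aspherically separated}) is not excluded by ``a union bound over the finitely many bad configuration types'': reduced spherical diagrams occur in infinitely many combinatorial types with unboundedly many $2$-cells, so there is no finite list to union over. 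Your justification that larger configurations are negligible --- that adding a relator costs $kd$ in the exponent while exposing at least $\lfloor k\slash2\rfloor$ new letters --- is false face-by-face (a new $2$-cell can be glued to an existing diagram along a single edge, exposing almost nothing), and, correctly formulated as a statement about entire diagrams, it \emph{is} the isoperimetric inequality for the model, i.e.\ the content of the Ollivier-type theorems. Your sketch therefore assumes the hard step.

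The paper's route avoids redoing that work: it shows (Lemma \ref{lem: halved group is also random} and the closing remark of Section \ref{sec: Random groups are aspherically relator separated}) that the relator set of $\mathcal{H}(G)$ embeds in, or asymptotically has the distribution of, an auxiliary model $\Theta(n,k\slash2,2d)$ for $k$ even or $\Omega(n,(k-1)\slash2,2kd\slash(k-1))$ for $k$ odd, each at density below $1\slash2$; it then invokes the $k$-angular analogue of Theorem \ref{thm: vKd reduced for random groups} (namely \cite[Theorem 3.11]{ashcroftrandom}, extended to the models $\Theta$ and $\Omega$) to conclude that these are w.o.p.(n) infinite and diagramatically reducible, and uses that both properties are inherited by sub-presentations. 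To make your direct first-moment argument rigorous you would need either to import that theorem or to reprove the local-to-global reduction to diagrams with boundedly many faces. You should also address, even if only to dismiss it, the correlation between the two halves $r_{i},r_{i}'$ of each relator: the relator set of $\mathcal{H}(G)$ is not an independent uniform sample. In the Gromov model this is what forces the conditioned models $\overline{\Theta},\overline{\Omega}$; in the $k$-angular model the paper observes that the dependence washes out as $n\rightarrow\infty$, but this needs to be stated.
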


We are dealing with asymptotics, hence frequently arrive at situations where $m$ is some parameter tending to infinity that is required to be an integer. If $m$ is not integer, then we will implicitly replace it by $\lfloor m\rfloor$. This will not affect the validity of any of our arguments.

\section*{Acknowledgements}
 I would like to thank John Mackay for comments on an earlier draft and Piotr Przytycki for comments on an earlier draft and communicating his conjecture. I am grateful to Emmanuel Breuillard for interesting conversations regarding Property (T) in random groups. As always, I would like to thank Henry Wilton for his invaluable advice, as well as comments on an earlier draft of this work.
\section{Using covers of graphs to obstruct (T)}\label{sec: Using covers of graphs to obstruct (T)}
We first discuss the construction of a specific covering of a given Cayley graph. Our construction is similar to Osajda's group cubization \cite{osajda2018group}, which employs Wise's double cover \cite[$\S9.a$]{Wise-MSQT}. Throughout, given a set $S$, $F(S)$ will be the free group on the set $S$. Consider subsets $T, R\subseteq F(S) $ with $T\subseteq \langle R\rangle$. Let $G_{T}=\langle S\vert T\rangle $ and $K_{R}=\langle S\vert R\rangle,$ so that there is an epimorphism $\xi:G_{T}\twoheadrightarrow K_{R}$ obtained by mapping each $s\in S$ in $G_{T}$ to the generator $s$ in $K_{R}$. Let $\Gamma_{T}$ be the Cayley graph of $G_{T}$ and $\Sigma_{R}$ the Cayley graph of $K_{R},$ so that $\pi_{1}(\Gamma_{T})=\langle\langle T\rangle\rangle $ and $\pi_{1}(\Sigma_{R})=\langle\langle R\rangle\rangle $.  We have the inclusion $\langle\langle T\rangle\rangle\hookrightarrow \langle\langle R\rangle\rangle,$ and hence an induced map $\iota:H_{1}(\langle\langle T\rangle\rangle)\rightarrow H_{1}(\langle\langle R\rangle\rangle)$. All homology groups in this text are taken with coefficients in $\mathbb{Z}\slash 2\mathbb{Z}$, and so homology groups are $\mathbb{F}_{2}$ vector spaces.

\begin{definition}
The \emph{double T-cover} of $\Sigma_{R}$ is the covering graph $\pi:\hat{\Delta}_{T,R}\looparrowright \Sigma_{R}$ corresponding to the kernel of the map $$P_{T,R}:\pi_{1}(\Sigma_{R})=\langle\langle R\rangle\rangle\twoheadrightarrow H_{1}(\langle\langle R\rangle\rangle)\twoheadrightarrow Q(T,R)=H_{1}(\langle\langle R\rangle\rangle)\slash \iota(H_{1}(\langle\langle T\rangle\rangle)).$$
Since $Q(T,R)=\bigoplus_{I}\mathbb{Z}\slash 2\mathbb{Z}$ for some index set $I$, $\hat{\Delta}_{T,R}$ is a normal cover of the Cayley graph $\Sigma_{R}$. It follows immediately by the construction and Sabidussi's theorem \cite{SabidussiGert1958OaCo} that $\hat{\Delta}_{T,R}$ is the Cayley graph of the group $\hat{H}_{T,R}=F(S)\slash \ker(P_{T,R})$. We also call $\hat{H}_{T,R}$ the \emph{double $T$-cover} of $K_{R}$ (note that $G_{T}\twoheadrightarrow \hat{H}_{T,R}\twoheadrightarrow K_{R}$). 
\end{definition}
\begin{remark}
The double cover is obtained by taking $T=\emptyset$. In particular, given a group $K_{R}=\langle S\vert R\rangle$, Osajda's group cubization $\tilde{H}$ is the double $T$-cover of $K_{R}$ for $T=\emptyset$. 
\end{remark}

\begin{remark}
    There are several different groups occurring in the above constructions. Particular care should be taken that we have \emph{surjections} $$G_{T}=\langle S\vert T\rangle \twoheadrightarrow \hat{H}_{T,R}=\langle S\vert \ker(P_{T,R})\rangle \twoheadrightarrow K_{R}=\langle S\vert R\rangle, $$
    whereas the relators give us \emph{inclusions} of subgroups of the free group $F(S)$, $$\langle \langle T\rangle\rangle \leq \ker (P_{T,R})\leq \langle \langle R\rangle \rangle.$$
    
    In an attempt to reduce confusion where possible, we make the following conventions. 
    \begin{itemize}
        \item Elements of $G_{T}$ will always be denoted by $g$, elements of $\hat{H}_{T,R}$ will always be denoted by $h$, and elements of $K_{R}$ will always be denoted by $k$ or $\kappa$.
    
   \item Elements of $T$ will always be written $t$ or $\tau$, whereas elements of $R$ will always be written $r$ or $\rho$.
    
   \item Finally, paths in $\Gamma_{T}$ will be denoted by $\gamma$, paths in $\hat{\Delta}_{T,R}$ will be denoted by $\delta$, and paths in $\Sigma_{R}$ will be denoted by $\sigma.$
      \end{itemize}
\end{remark}

We now move to the central definition of this text, designed to give us control over the quotient group $Q(T,R)$, and hence allows us to also obtain control over the covering graph $\hat{\Delta}_{T,R}$. A group presentation $L=\langle X\vert Y\rangle$ is \emph{aspherical} if the corresponding presentation complex $\mathcal{P}(X,Y)$ is topologically aspherical, or equivalently, if the second homology with integer coefficients of the universal cover of $\mathcal{P}(X,Y)$, $\tilde{\mathcal{P}}(X,Y)$, is trivial.
\begin{definition}\label{def: aspherically separated}
Let $L=\langle S\vert  t_{1},\hdots , t_{N}\rangle$ be a finite presentation such that each $t_{i}$ is reduced without cancellation, and we have fixed a partition $t_{i}=r_{i}r_{i}'$ for each word $t_{i}$. Let $\mathcal{H}(L)=\langle S\vert r_{1},r_{1}',\hdots , r_{N},r_{N}'\rangle .$
We say that $L$ is \emph{aspherically relator separated} if the presentations $L$ and $\mathcal{H}(L)$ are aspherical, and $\mathcal{H}(L)$ is infinite. 
\end{definition}
Since $L$ surjects $\mathcal{H}(L)$, the above also implies that $L$ is infinite. 

\begin{remark}
    In the sequel, unless otherwise specified, we will always fix the partition $t_{i}=r_{i}r_{i}'$ with $\vert r_{i}\vert=\vert r_{i}'\vert$ if $\vert t_{i}\vert$ is even, and $\vert r_{i}\vert=\vert r_{i}\vert-1$ otherwise. 
\end{remark}

We begin by noting the following consequences of a group presentation being aspherical.
\begin{lemma}\label{lem: consequences of asphericity}\cite{Chiswell_asphericity}
    Let $L=\langle X\vert Y\rangle$ be an aspherical group presentation.
    \begin{enumerate}[label=$\roman*)$]
        \item No $y\in Y$ is a proper power, and if $y\in Y$ then no other member of $Y$ is a cyclic conjugate of $y$ or $y^{-1}$ (including the trivial conjugates $y$, $y^{-1}$).
        \item If $y,y',\gamma,\gamma'\in Y$ with $y'\neq y^{-1}$ and $yy'=\gamma\gamma'$, then $y=\gamma$ and $y'=\gamma'$.
    \end{enumerate}
\end{lemma}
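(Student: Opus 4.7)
The plan is to use the standard characterisation of asphericity of a two--dimensional presentation complex in terms of the nonexistence of reduced spherical diagrams. Concretely, $\mathcal{P}(X,Y)$ is aspherical if and only if every cellular map $S^2\to\mathcal{P}(X,Y)$ admits a cancelling pair of adjacent $2$--cells (i.e., the diagram is not reduced). Given this equivalence, both items reduce to exhibiting particular reduced spherical diagrams from the hypothesised configurations, contradicting asphericity.

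For (i), suppose first that $y=w^n$ in $F(X)$ with $n\geq 2$ and $w\neq 1$. I would construct a spherical diagram by taking two copies $D_1,D_2$ of the $2$--cell labelled by $y$ and identifying their boundary circles, each of length $n|w|$, via a rotation by $|w|$ letters. The periodicity of $w^n$ ensures the labels match, but since the two cells carry the same label (with matched orientations) rather than inverse labels, no cancelling pair of adjacent cells occurs; the diagram is reduced. The same construction handles the cyclic--conjugate cases: if $y_1\neq y_2^{\pm 1}$ lie in $Y$ and $y_1$ is a cyclic conjugate of $y_2^{\pm 1}$, glue the corresponding $2$--cells along boundaries matched by the conjugation. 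One then needs to verify reducedness: since the two cells are \emph{distinct} relators of $Y$, they cannot form a cancelling pair (which would require one to be a mirror image of the other along the shared edge, forcing equality as relators).

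For (ii), I would use part (i) to rule out all degenerate identifications among $y,y',\gamma,\gamma'$, and then argue by contradiction. Assume $y\neq\gamma$ (WLOG), $y'\neq y^{-1}$, and $yy'=\gamma\gamma'$ in $F(X)$. Build a spherical diagram consisting of two hemispheres: the upper hemisphere is a disk diagram with two $2$--cells carrying labels $y,y'$ meeting along an arc so that the boundary word reads $yy'$; the lower hemisphere mirrors this with cells $\gamma,\gamma'$ giving boundary $\gamma\gamma'$. Since $yy'=\gamma\gamma'$ in $F(X)$, after free reduction of the equatorial boundaries the two hemispheres glue into $S^2$. One then checks that, granted (i), no adjacent pair among the four cells is a cancelling pair: the excluded $y'=y^{-1}$ case eliminates the upper cancellation, its analogue on the bottom hemisphere is handled similarly, and cross--hemisphere cancellations are ruled out by (i) because they would force cyclic conjugacy among distinct relators. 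The resulting reduced spherical diagram contradicts asphericity.

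The main obstacle is the reducedness verification in (ii): it requires a careful enumeration of which pairs of the four $2$--cells $y,y',\gamma,\gamma'$ are adjacent in the sphere and a systematic application of (i) to exclude cancellation in each case. The subtle point is treating the free--reduction between the reduced words $yy'$ and $\gamma\gamma'$, which may involve cancellation inside each word; one has to argue that, modulo (i), any such cancellation forces either $y=\gamma$ outright or produces a cyclic conjugacy forbidden by (i), closing the contradiction.
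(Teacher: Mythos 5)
Your argument rests on the equivalence ``$\mathcal{P}(X,Y)$ is aspherical if and only if every spherical diagram admits a cancelling pair'', and this equivalence is false in the direction you need. The implication ``every spherical diagram is reducible $\Rightarrow$ aspherical'' does hold (this is Gersten's Remark 3.2, quoted later in the paper), but the converse fails: a \emph{reduced} spherical diagram determines an identity sequence among the relators that can nevertheless represent $0$ in $\pi_{2}(\mathcal{P}(X,Y))\cong H_{2}(\tilde{\mathcal{P}}(X,Y))$, since Peiffer-trivialising it may require moves that temporarily increase the diagram's complexity. Diagrammatic reducibility is a strictly stronger property than topological asphericity --- that is precisely why it is introduced as a separate notion. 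So exhibiting a reduced spherical diagram, as you do in both (i) and (ii), does not by itself contradict asphericity; you would additionally have to show that each of your spheres is non-trivial in $\pi_{2}$, or equivalently gives a non-zero cycle in $H_{2}$ of the universal cover. For (i) this can be done by a direct computation in the relation module (which is essentially how the cited Proposition 1.3 of Chiswell--Collins--Huebschmann proceeds), but it is an extra argument, not a consequence of reducedness.

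There is a second, independent problem in (ii). The correct intermediate conclusion (the paper extracts it from the validity of Lyndon--Schupp's Condition (I.1) for aspherical presentations) is a dichotomy: either $y=\gamma$ and $y'=\gamma'$, or $y=\gamma'$ and $y'=\gamma$. In the second case $yy'=y'y$ in $F(X)$, so $y$ and $y'$ are powers of a common element, and only then does part (i) (no proper powers) force $y=y'=\gamma=\gamma'$. Your sketch assumes $y\neq\gamma$ and hopes to produce a reduced sphere, but in the configuration $y=\gamma'$, $y'=\gamma$ the two hemispheres are built from the \emph{same} pair of relators, and part (i) cannot exclude the cross-hemisphere cancellations there, because the potentially cancelling cells carry identical relators rather than distinct cyclic conjugates; indeed in that configuration the sphere must be reducible whenever the presentation is aspherical, so no contradiction is available. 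The case analysis therefore does not close even if one grants the diagrammatic characterisation. The paper's route --- citing the homological results of Chiswell--Collins--Huebschmann and finishing the second branch of the dichotomy with the elementary commuting-elements argument --- is what actually handles this case.
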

\begin{proof}
Statement $i)$ follows by \cite[Proposition $1.3$]{Chiswell_asphericity}. 

For Statement $ii)$, we use the statement from \cite[proof of Lemma 1.8]{Chiswell_asphericity}, that Condition (I.1) of \cite[$\S III.10.2$]{LyndonCGT} is valid for $L$. Put simply, this implies that, since $yy'\gamma'^{-1}\gamma^{-1}=1$ with $y\neq y^{-1}$, either: $y=\gamma$ and $y'=\gamma'$; or $y=\gamma'$ and $y'=\gamma$. 

Suppose that $y=\gamma',\;y'=\gamma$. Then in the free group $F(X)$ we have $yy'=y'y$. In particular, $y$ and $y'$ commute, and so are powers of a common element $v$. Since $y,y'$ cannot be proper powers by statement $i)$, we have $y=y'=v,$ so that $y=y'=\gamma=\gamma'$. In particular, in either case we have $y=\gamma$ and $y'=\gamma'$.\end{proof}

\begin{theorem}\cite[Proposition 1.2]{Chiswell_asphericity}
Let $L=\langle X\vert  Y\rangle$ be an aspherical group presentation and let $\overline{Y}=\langle\langle Y\rangle\rangle^{ab}$ be the relation module. Then $\overline{Y}$ decomposes as a free $L$-module into a direct sum of cyclic submodules $M_{y}$, $y\in Y$, where each $M_{y}$ is generated by $\overline{y}=y[\langle\langle Y\rangle\rangle,\langle\langle Y\rangle\rangle]$.
\end{theorem}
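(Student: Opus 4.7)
The plan is to interpret the relation module topologically. Let $\mathcal{P}=\mathcal{P}(X,Y)$ be the presentation $2$-complex of $L$, and let $\tilde{\mathcal{P}}$ be its universal cover; then $L$ acts freely and cocompactly on $\tilde{\mathcal{P}}$ by deck transformations, preserving the cellular structure. I will extract the claim from the cellular chain complex of $\tilde{\mathcal{P}}$ regarded as a complex of $\mathbb{Z}L$-modules.

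First I would identify $\overline{Y}$ with $H_{1}(\tilde{\mathcal{P}}^{(1)})$, where $\tilde{\mathcal{P}}^{(1)}$ is the $1$-skeleton of $\tilde{\mathcal{P}}$. This $1$-skeleton is the Cayley graph of $L$ with respect to $X$, so $\pi_{1}(\tilde{\mathcal{P}}^{(1)})=\ker(F(X)\twoheadrightarrow L)=\langle\langle Y\rangle\rangle$; since $\tilde{\mathcal{P}}^{(1)}$ is a $1$-complex, the Hurewicz map is the abelianisation, which gives $H_{1}(\tilde{\mathcal{P}}^{(1)})=\overline{Y}$ and endows it with its natural $\mathbb{Z}L$-module structure.

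Next, consider the cellular chain complex $C_{2}(\tilde{\mathcal{P}})\xrightarrow{\partial_{2}}C_{1}(\tilde{\mathcal{P}})\xrightarrow{\partial_{1}}C_{0}(\tilde{\mathcal{P}})$ of free $\mathbb{Z}L$-modules, where one chosen lift of the $2$-cell attached along $y$ yields a generator $e_{y}$ and $C_{2}=\bigoplus_{y\in Y}\mathbb{Z}L\cdot e_{y}$. Because $\tilde{\mathcal{P}}$ is simply connected, $H_{1}(\tilde{\mathcal{P}})=0$, so $\mathrm{im}(\partial_{2})=\ker(\partial_{1})=H_{1}(\tilde{\mathcal{P}}^{(1)})=\overline{Y}$, and $\partial_{2}$ surjects $C_{2}$ onto $\overline{Y}$. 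The image $\partial_{2}(e_{y})$ is by construction the homology class of the attaching loop $y$ in $\tilde{\mathcal{P}}^{(1)}$, which is exactly $\overline{y}$.

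The remaining step, where asphericity enters, is injectivity of $\partial_{2}$. Since $\tilde{\mathcal{P}}$ has no cells of dimension greater than $2$, we have $\ker(\partial_{2})=H_{2}(\tilde{\mathcal{P}})$; the hypothesis that $\mathcal{P}$ is aspherical forces $\pi_{2}(\mathcal{P})=0$, hence by Hurewicz $H_{2}(\tilde{\mathcal{P}})=0$. Thus $\partial_{2}$ restricts to a $\mathbb{Z}L$-module isomorphism $\bigoplus_{y\in Y}\mathbb{Z}L\cdot e_{y}\xrightarrow{\cong}\overline{Y}$ sending $e_{y}\mapsto\overline{y}$, giving the decomposition $\overline{Y}=\bigoplus_{y\in Y}M_{y}$ into free cyclic $\mathbb{Z}L$-submodules $M_{y}=\mathbb{Z}L\cdot\overline{y}$. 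The only bookkeeping subtlety I anticipate is checking carefully, with a fixed choice of basepoints and lifts, that the topological generator $e_{y}$ really does correspond under the identification $H_{1}(\tilde{\mathcal{P}}^{(1)})\cong\overline{Y}$ to the algebraic class $\overline{y}=y[\langle\langle Y\rangle\rangle,\langle\langle Y\rangle\rangle]$; this is standard but must be verified to obtain the stated generators.
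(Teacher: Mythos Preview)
Your argument is correct and is the standard topological proof of this fact. Note, however, that the paper does not supply its own proof of this theorem at all: it is simply quoted as \cite[Proposition~1.2]{Chiswell_asphericity} and used as a black box. Your reconstruction via the cellular chain complex of the universal cover of the presentation complex, using asphericity to kill $H_{2}(\tilde{\mathcal{P}})=\ker\partial_{2}$ and thereby exhibit $\partial_{2}\colon \bigoplus_{y\in Y}\mathbb{Z}L\cdot e_{y}\to \overline{Y}$ as an isomorphism, is exactly the classical argument (and is essentially how the result is proved in the cited reference). There is nothing further to compare.
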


In particular, $H_{1}(\langle\langle Y\rangle\rangle)$ is easily described.

\begin{remark}\label{rmk: first homology for aspherical presentations}
    Let $L=\langle X\vert  Y\rangle$ be an aspherical group presentation. Then $$H_{1}(\langle\langle Y\rangle\rangle) =\bigoplus_{\ell\in L}\bigoplus_{y\in Y}\mathbb{Z}\slash 2\mathbb{Z}(\ell\overline{y}).$$
\end{remark}

\begin{remark}
    For the remainder of this section, we will assume that we have fixed an aspherically relator separated group $G=\langle S\vert T\rangle$. We further assume that $T=\{t_{1},\hdots ,t_{N}\}$, where we have fixed for each $i$ a partition $t_{i}=r_{i}r_{i}'$, so that $\mathcal{H}(G)=\langle S\vert R\rangle$ for $R=\{r_{1},r_{1}',\hdots ,r_{N},r_{N}'\}.$ Using our earlier notation, we write $G=G_{T}$ and $\mathcal{H}(G)=K_{R}$. We construct the double $T$-covers: $\hat{\Delta}_{T,R}$ for $\Sigma_{R}$; and $\hat{H}_{T,R}$ for $K_{R}$. 
\end{remark}
We begin by analysing the group $Q(T,R)=H_{1}(\langle\langle R\rangle\rangle)\slash \iota(H_{1}(\langle\langle T\rangle\rangle))$. Recall that we have the surjection $\xi: G_{T}\twoheadrightarrow K_{R}$, and for $\rho\in R$ we defined $\overline{\rho}:=\rho [\langle \langle R\rangle\rangle,\langle \langle R\rangle\rangle]$. Let $\Psi: H_{1}(\langle \langle R\rangle\rangle) \twoheadrightarrow Q(T,R)$ be the obvious surjection. 

\begin{lemma}\label{lem: relations in Q(T,R)}
    For each basis element $k\overline{r}$ of $$H_{1}(\langle\langle R\rangle\rangle)=\bigoplus_{\kappa\in K_{R}}\bigoplus_{\rho\in R} \mathbb{Z}\slash2\mathbb{Z}\left(\kappa\overline{\rho}\right)$$ as an $\mathbb{F}_{2}$ vector space, there exists exactly one other basis element $k'\overline{r'}$ of $H_{1}(\langle\langle R\rangle\rangle)$ with $\Psi(k\overline{r})=_{Q(T,R)}\Psi(k'\overline{r'})$. This basis element takes the form $k\overline{r'}$, where $r'\neq r$ is the unique element of $R$ with $rr'$ (or $r'r$) lying in $T$.
\end{lemma}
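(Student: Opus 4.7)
The plan is to identify $\iota(H_1(\langle\langle T\rangle\rangle))$ explicitly inside $H_1(\langle\langle R\rangle\rangle)$ as the $\mathbb{F}_2$-span of "partner-sum" elements, and then read off the equivalence classes of basis elements under $\Psi$. Asphericity of both $L$ and $\mathcal{H}(L)$ does most of the work: by Remark \ref{rmk: first homology for aspherical presentations}, $H_1(\langle\langle T\rangle\rangle)$ has $\mathbb{F}_2$-basis $\{g\overline{t_i}\}_{g\in G_T,\,1\le i\le N}$ and $H_1(\langle\langle R\rangle\rangle)$ has $\mathbb{F}_2$-basis $\{\kappa\overline{\rho}\}_{\kappa\in K_R,\,\rho\in R}$; and Lemma \ref{lem: consequences of asphericity}(i) applied to $\mathcal{H}(L)$ guarantees that the $2N$ words $r_1,r_1',\ldots,r_N,r_N'$ are pairwise distinct in $F(S)$, so $R$ really has cardinality $2N$ with a fixed-point-free involution pairing each $r_i$ with $r_i'$. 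Since $\iota$ is induced by the inclusion $\langle\langle T\rangle\rangle\hookrightarrow\langle\langle R\rangle\rangle$ followed by abelianisation, and is equivariant via $\xi\colon G_T\twoheadrightarrow K_R$ with respect to the two conjugation actions, for $g\in G_T$ a direct computation gives
\[\iota(g\overline{t_i}) \;=\; \xi(g)\overline{t_i} \;=\; \xi(g)\bigl(\overline{r_i}+\overline{r_i'}\bigr),\]
using only that $t_i=r_ir_i'$ and that abelianisation turns concatenation into sum. Because $\xi$ is surjective, varying $g$ and $i$ yields
\[\iota\bigl(H_1(\langle\langle T\rangle\rangle)\bigr) \;=\; \mathrm{span}_{\mathbb{F}_2}\bigl\{\kappa(\overline{r_i}+\overline{r_i'}) : \kappa\in K_R,\; 1\le i\le N\bigr\}.\]

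Next I would take the quotient by this subspace. Split the basis of $H_1(\langle\langle R\rangle\rangle)$ into $\kappa$-slices $V_\kappa := \mathrm{span}\{\kappa\overline{\rho} : \rho\in R\}$; each partner-sum $\kappa(\overline{r_i}+\overline{r_i'})$ lies entirely inside a single such slice, so $\iota(H_1(\langle\langle T\rangle\rangle))$ respects the slice decomposition and $Q(T,R)$ splits accordingly as a direct sum over $\kappa\in K_R$. Inside a fixed slice $V_\kappa$ of dimension $2N$, the $N$ partner-sums $\{\kappa(\overline{r_i}+\overline{r_i'})\}_{i=1}^N$ have pairwise disjoint two-element supports — precisely the disjoint pairs $\{r_i,r_i'\}$ partitioning $R$ — hence are linearly independent, and quotienting by them identifies $\kappa\overline{r_i}$ with $\kappa\overline{r_i'}$ and creates no other identifications among basis elements. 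Thus every basis element $k\overline{r}$ is $\Psi$-equivalent to exactly one other basis element, namely $k\overline{r'}$ for $r'$ the unique partner of $r$ in $R$ (so that $rr'$ or $r'r$ lies in $T$), which is exactly the claim.

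The main and really only obstacle is the cleanness of the partition: we must be sure that $R$ genuinely splits into disjoint partner-pairs of size two, i.e.\ that no two of the $2N$ listed words $r_i,r_i'$ coincide in $F(S)$ (so in particular $r_i\ne r_i'$, preventing a partner-sum from being zero or degenerate). This is precisely what Lemma \ref{lem: consequences of asphericity}(i), applied to the aspherical presentation $\mathcal{H}(L)$, supplies, since two equal (or even cyclically conjugate) entries of $R$ would violate it. Once this partition is in place, the rest of the argument is a routine calculation in free $\mathbb{F}_2$-modules.
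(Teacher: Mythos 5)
Your argument is correct and is essentially the paper's own proof: both compute $\iota(H_{1}(\langle\langle T\rangle\rangle))$ as the $\mathbb{F}_{2}$-span of the partner-sums $\kappa(\overline{r_{i}}+\overline{r_{i}'})$, observe that these spanning vectors have pairwise disjoint supports in the basis $\{\kappa\overline{\rho}\}$ (the paper phrases this as the spanning set being ``pairwise orthogonal''), and read off that the quotient identifies each $\kappa\overline{r_{i}}$ with $\kappa\overline{r_{i}'}$ and nothing else. The one point you pass over is the closing characterisation of $r'$ as \emph{the} unique element of $R$ with $rr'$ or $r'r$ in $T$: part $(i)$ of Lemma \ref{lem: consequences of asphericity} gives you the fixed-point-free pairing coming from the chosen partitions, but to exclude an accidental second factorisation $t_{j}=\rho\rho''$ with $\{\rho,\rho''\}$ not one of the chosen pairs you also need part $(ii)$ of Lemma \ref{lem: consequences of asphericity}, which is precisely what the paper invokes at that step.
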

Since $K_{R}$ is infinite, $\langle \langle R\rangle\rangle$ is an infinite index normal subgroup of $F(S).$ Therefore, $\langle \langle R\rangle\rangle$ is an infinite rank free group, and so $H_{1}(\Sigma_{R})=\langle\langle R\rangle\rangle^{ab}$ is infinite rank. Lemma \ref{lem: relations in Q(T,R)} implies that $Q(T,R)$ is of infinite rank, i.e. that it is infinite dimensional as an $\mathbb{F}_{2}$ vector space.
\begin{proof}
  Using Remark \ref{rmk: first homology for aspherical presentations}, we see that $\iota(H_{1}(\langle\langle T\rangle\rangle))$ is the subspace of $$H_{1}(\langle\langle R\rangle\rangle)=\bigoplus_{\kappa\in K_{R}}\bigoplus_{\rho\in R}\mathbb{Z}\slash2\mathbb{Z}\left(\kappa\overline{\rho}\right)$$ spanned by the set $\mathcal{V}=\left\{\left(\xi(g)\overline{\rho}+\xi(g)\overline{\rho'}\right):g\in G_{T},\rho,\rho'\in R, \rho\rho'\in T\right\}.$
  We have that $\Psi (k\overline{r})=_{Q(T,R)}\Psi (k'\overline{r'})$ if and only if $(k\overline{r}+k'\overline{r'})$ lies in the span of $\mathcal{V}$. By Lemma \ref{lem: consequences of asphericity} and Remark \ref{rmk: first homology for aspherical presentations}, the vectors in $\mathcal{V}$ are pairwise orthogonal, that is when adding two distinct elements of $\mathcal{V}$, no cancellation occurs.  Therefore,  $(k\overline{r}+k'\overline{r'})\in Span(\mathcal{V})$ if and only if $k=k'$ and either: $rr'\in T$ or $r'r\in T$; or $r=r'$ (in this latter case $k\overline{r}+k'\overline{r}'=_{Q(T,R)}\boldsymbol{\underline{0}})$. By Lemma \ref{lem: consequences of asphericity}, there is exactly one $r'\in R$ such that $rr'$ or $r'r$ is an element of $T$. Furthermore, by Lemma \ref{lem: consequences of asphericity}, no $t\in T$ is a proper power, so $r\neq r'$.
\end{proof}
We now find cutsets in $\hat{\Delta}_{T,R}.$ We will make great use of the following loops.
\begin{definition}\label{def: the loop sigma r}
Let $r$ be an element of $R$. We define $\sigma_{r}$ as the loop based at the identity in $\Sigma_{R}$ and read by $r$. 
\end{definition}

\begin{lemma}\label{lem: preimages are separating}
    Let $\pi:\hat{\Delta}_{T,R}\looparrowright \Sigma_{R}$ be the double $T$-cover, and for each $r\in R$ let $\sigma_{r}$ as the loop based at the identity in $\Sigma_{R}$ and read by $r$. For each word $t=rr'\in T$, and each $k\in K_{R}$, the set $\pi^{-1}(k\sigma_{r}\cup k\sigma_{r'})$ is a separating subset of $\hat{\Delta}_{T,R}$. 
\end{lemma}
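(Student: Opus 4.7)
The plan is to reduce to a double cover via a character of $Q(T,R)$. By Lemma~\ref{lem: relations in Q(T,R)}, the element $q := \Psi(k\overline{r}) = \Psi(k\overline{r'}) \in Q(T,R)$ is a single nonzero basis vector. I would pick a character $\chi : Q(T,R) \to \mathbb{F}_2$ with $\chi(q) = 1$ and $\chi = 0$ on the other basis vectors. The composition $\chi \circ P_{T,R}$ classifies a connected double cover $\pi' : \tilde\Sigma \to \Sigma_R$, and $\hat\Delta_{T,R} \to \Sigma_R$ factors through $\tilde\Sigma$. Since preimages of separating sets under covering maps remain separating, it suffices to prove that $(\pi')^{-1}(k\sigma_r \cup k\sigma_{r'})$ separates $\tilde\Sigma$.

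Set $E_0 := E(k\sigma_r) \cup E(k\sigma_{r'})$. In a connected double cover of a graph, the preimage of an edge set $E_0$ separates exactly when the classifying class $[\chi \circ P_{T,R}] \in H^1(\Sigma_R;\mathbb{F}_2)$ admits a cocycle representative supported on $E_0$. Equivalently, $\chi \circ P_{T,R}$ must vanish on every loop $\gamma$ in $\Sigma_R$ whose edge support is disjoint from $E_0$. Using the $H_1(\langle\langle R\rangle\rangle)$-basis from Remark~\ref{rmk: first homology for aspherical presentations} to expand $[\gamma] = \sum c_{\kappa',\rho'}\kappa'\overline{\rho'}$, the choice of $\chi$ reduces the required vanishing to $c_{k,r}(\gamma) + c_{k,r'}(\gamma) \equiv 0 \pmod{2}$.

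This combinatorial Jordan-curve-type statement is the main obstacle. By asphericity of $\mathcal{H}(L) = \langle S \vert R\rangle$, the cycle $\gamma$ bounds a unique 2-chain $C_\gamma = \sum c_{\kappa',\rho'} \kappa' D_{\rho'}$ in the Cayley 2-complex of $K_R$, whose coefficients are the $H_1$-expansion coefficients. The hypothesis that $\gamma$ avoids every edge of $E_0 = \partial(k D_r) \cup \partial(k D_{r'})$ must force $c_{k,r} + c_{k,r'}$ to vanish mod $2$. I expect to use Lemma~\ref{lem: consequences of asphericity}(ii) to control how the boundaries of other 2-cells $\kappa' D_{\rho'}$ can traverse an edge of $E_0$: the rigidity of aspherical relator decompositions restricts the ways such boundaries can coincide on an $E_0$-edge, and combined with asphericity (injectivity of $\partial_2$) this should pin down the needed parity relation on $c_{k,r}$ and $c_{k,r'}$.

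Once the cocycle $c$ supported on $E_0$ is produced, integrating it along paths in $\tilde\Sigma$ from a fixed basepoint yields $\phi : V(\tilde\Sigma) \to \mathbb{F}_2$ that is constant on every component of $\tilde\Sigma \setminus (\pi')^{-1}(E_0)$ and nonconstant overall, as the cocycle is nontrivial. This gives the asserted separation of $\tilde\Sigma$, which pulls back along $\hat\Delta_{T,R} \to \tilde\Sigma$ to the separation of $\hat\Delta_{T,R}$ by $\pi^{-1}(k\sigma_r \cup k\sigma_{r'})$.
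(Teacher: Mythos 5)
Your reduction is sound and is essentially the paper's own strategy in different clothing: the paper pairs $P_{T,R}$ of a loop against the basis vector $\Psi(k\overline{r})$ of $Q(T,R)$ (using Lemma \ref{lem: relations in Q(T,R)}) and argues that a path crossing one lift of $k\sigma_{r}$ or $k\sigma_{r'}$ pairs to $1$ while a path avoiding $\pi^{-1}(k\sigma_{r}\cup k\sigma_{r'})$ pairs to $0$; your character $\chi$, the intermediate double cover, and the ``integrate the cocycle'' step are a faithful translation of that. The covering-space bookkeeping (preimages of separating sets separate, $\hat{\Delta}_{T,R}\to\Sigma_{R}$ factors through $\tilde{\Sigma}$) is correct.

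The problem is that everything you have actually proved is the formal part, and the step you flag as ``the main obstacle'' --- that every loop in $\Sigma_{R}$ whose edge support misses $E_{0}$ satisfies $c_{k,r}+c_{k,r'}\equiv 0 \pmod 2$ --- is the entire content of the lemma, and your proposal does not establish it. You say you ``expect'' Lemma \ref{lem: consequences of asphericity}(ii) to pin down the parity, but that lemma is a statement about equalities $yy'=\gamma\gamma'$ between products of relators in the free group; it controls the combinatorics of the partition $t=rr'$ (and is what makes $\{\Psi(\kappa\overline{\rho})\}$ a basis in Lemma \ref{lem: relations in Q(T,R)}), not how the translated loops $\kappa\sigma_{\rho}$ overlap edge-by-edge with $k\sigma_{r}$ inside the Cayley graph. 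The coefficient $c_{\kappa,\rho}$ of a $1$-cycle $z$ is indeed read off from the unique $2$-chain filling $z$ (by asphericity of $\mathcal{H}(L)$, as you note), but the only relations you get for free from ``$z$ vanishes on each edge $e$ of $k\sigma_{r}$'' are $\sum_{(\kappa,\rho)} c_{\kappa,\rho}\,m_{\kappa,\rho}(e)=0$, where $m_{\kappa,\rho}(e)$ records whether $\kappa\sigma_{\rho}$ traverses $e$ an odd number of times; isolating $c_{k,r}+c_{k,r'}$ from these requires controlling which \emph{other} translates $\kappa\sigma_{\rho}$ run through edges of $k\sigma_{r}\cup k\sigma_{r'}$, and nothing in your sketch (nor in Lemma \ref{lem: consequences of asphericity}) supplies that control. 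Equivalently, the cocycle supported on $E_{0}$ whose existence you assert is exactly the object whose existence needs proof. So the proposal correctly locates the crux of the lemma --- indeed the very point the paper's own proof passes over quickly when it asserts $\langle P_{T,R}(\phi_{\delta'}),\Psi(k\overline{r})\rangle=0$ --- but as written it leaves that crux as a hope rather than an argument, and the tool you nominate for closing it does not obviously do the job.
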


Note that, for $t=rr'\in T$, the loop $\sigma_{r}\sigma_{r'}$ in $\Sigma_{R}$ lifts to a loop $\delta_{r,r'}=\tilde{\sigma}_{r}\tilde{\sigma}_{r'}$ in $\hat{\Delta}_{T,R}$; the content of the above lemma is that the orbit of this loop under the deck group forms a cutset. This is illustrated in Figure \ref{fig: cutsets}.

\begin{figure}[H]
  \centering 
  	\includegraphics[scale=0.7]{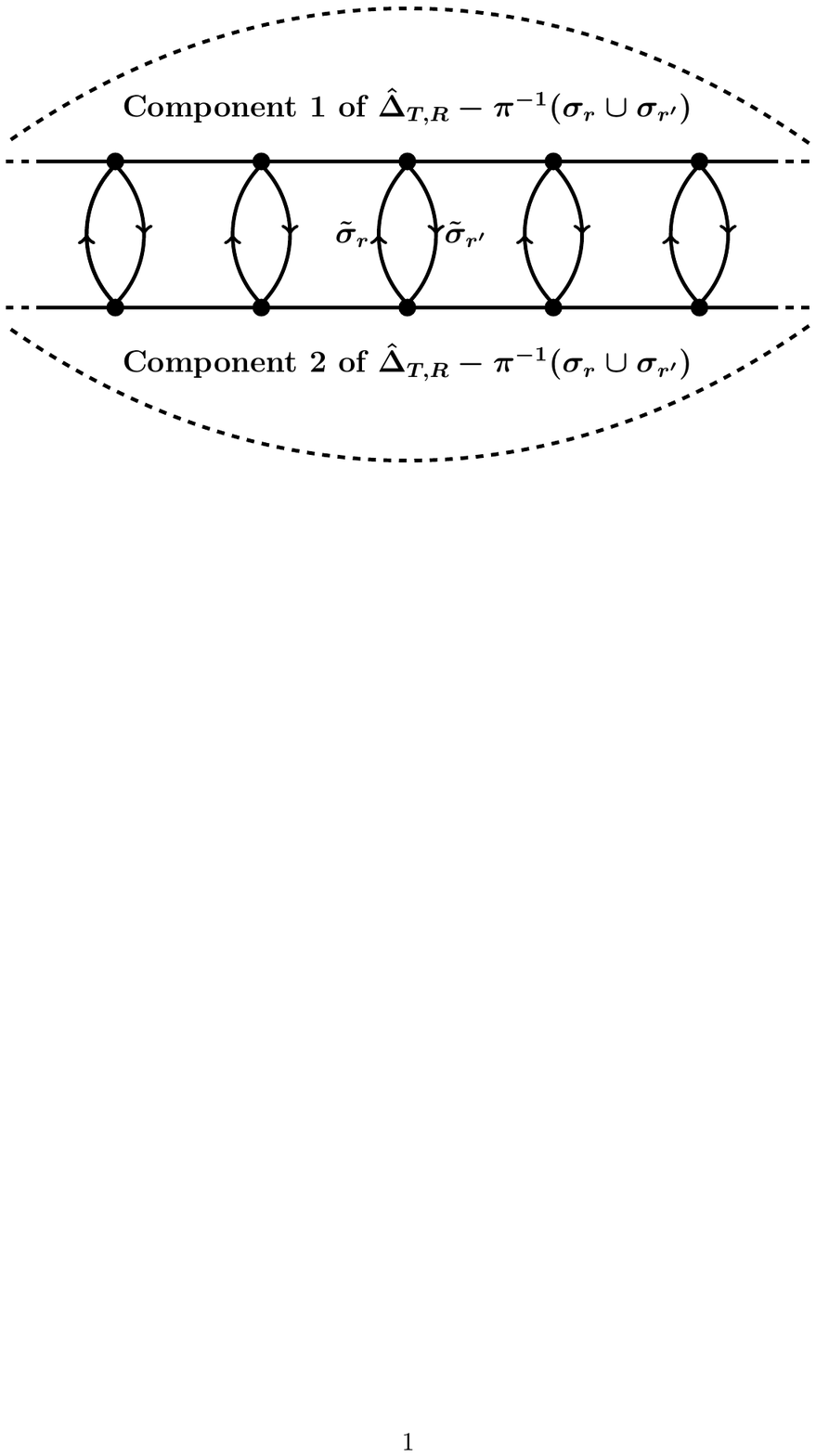}
    \caption{\centering The cutset $\pi^{-1}(\sigma_{r}\cup \sigma_{r'})$}\label{fig: cutsets}
\end{figure}

\begin{proof}

By Lemma \ref{lem: relations in Q(T,R)}, we see that the set $\{\Psi(\kappa\overline{\rho})\;:\;\kappa\in K_{R},t=\rho\rho'\in T\}$ forms a basis for the group $Q(T,R)$ when viewed as an $\mathbb{F}_{2}$ vector space. In particular, an element $\underline{\boldsymbol{x}}\in Q(T,R)$ may be written as $$\underline{\boldsymbol{x}}=\sum\limits_{\kappa\in K_{R},t=\rho\rho'\in T}\underline{\boldsymbol{x}}(\kappa,\rho)\Psi(\kappa\overline{\rho}).$$ Let $\langle \cdot,\cdot\rangle$ be the symmetric bilinear form on $Q(T,R)$, defined as follows. Given elements $\underline{\boldsymbol{x}},\;\underline{\boldsymbol{y}}\in Q(T,R),$
define $$\langle \underline{\boldsymbol{x}},\underline{\boldsymbol{y}}\rangle=\sum_{\kappa\in K_{R},t=\rho\rho'\in T}\underline{\boldsymbol{x}}(\kappa,\rho)\underline{\boldsymbol{y}}(\kappa,\rho).$$

Now, let $\delta$ be a simple path in $\hat{\Delta}_{T,R}$ containing exactly one subpath, $\delta_{1}$, that is the lift of exactly one of the loops $k\sigma_{r}$ or $k\sigma_{r'}$. Suppose that $\delta'$ is a simple path in $\hat{\Delta}_{T,R}-\pi^{-1}(k\sigma_{r}\cup k\sigma_{r'})$ connecting the endpoints of $\delta$. Then $\delta \delta'^{-1}$ is a loop in $\hat{\Delta}_{T,R}$, and so gives us an element $\phi$ of $\pi_{1}(\hat{\Delta}_{T,R})\leq \langle \langle R\rangle\rangle$. Since $\phi$ lies in $\pi_{1}(\hat{\Delta}_{T,R})=\ker (P_{T,R})$, we have that $P_{T,R}(\phi)=_{Q(T,R)}\underline{\boldsymbol{0}}$.

Let $\phi_{\delta}\in \langle \langle R\rangle\rangle=\pi_{1}(\Sigma_{R})$ be the element corresponding to the loop $\pi(\delta)$ lying in $\Sigma_{R}$, and define similarly $\phi_{\delta'}$. Note that $P_{T,R}(\phi)=P_{T,R}(\phi_{\delta})+P_{T,R}(\phi_{\delta'}).$ By our assumptions, and as the set $\{\Psi(\kappa \overline{\rho}):\kappa\in K_{R},t=\rho\rho'\in T\}$ forms a basis for the group $Q(T,R)$ when viewed as an $\mathbb{F}_{2}$ vector space, we have that $\langle P_{T,R}(\phi_{\sigma}), \Psi (k\overline{r})\rangle =1$, and $\langle P_{T,R}(\phi_{\delta'}), \Psi(k\overline{r})\rangle=0$, so that $\langle P_{T,R}(\phi), \Psi (k\overline{r})\rangle=1,$ a contradiction as $P_{T,R}(\phi)=_{Q(T,R)}\underline{\boldsymbol{0}}$.

\end{proof}

We now describe how to find finite-sided cutsets in $\hat{\Delta}_{T,R}$. The first case we deal with is when $\hat{\Delta}_{T,R}-\pi^{-1}(\sigma_{r}\cup\sigma_{r'})$ contains exactly one infinite component. If $\Sigma_{R}-(\sigma_{r}\cup\sigma_{r'})$ contains more than one infinite component, then we will use a slightly different approach, which requires less work.

\begin{definition}\label{def: Arr}
Let $t=rr'\in T$ and let $\sigma_{r}$ (respectively $\sigma_{r'}$) be the loop based at the identity in $\Sigma_{R}$ and read by $r$ (respectively $r'$). Suppose $\Sigma_{R}-(\sigma_{r}\cup \sigma_{r'})$ contains exactly one infinite component. Let $A_{r,r'}$ be the union of $\sigma_{r}\cup\sigma_{r'}$ together with the finite connected components of $\Sigma_{R}-(\sigma_{r}\cup \sigma_{r'})$.
\end{definition}
By Lemma \ref{lem: preimages are separating}, $\hat{\Delta}_{T,R}-\pi^{-1}(A_{r,r'})$ contains at least two components. It is easy to see that there exists a finite uniform bound $D$ to the diameter of $A_{r,r'}$ as $rr'$ ranges over the elements of $T$.

\begin{remark}
  Suppose that $\Sigma_{R}-(\sigma_{r}\cup \sigma_{r'})$ contains exactly one infinite component. Then $\Sigma_{R}-A_{r,r'}$ consists of exactly one component, which is infinite. Therefore, all components of $\hat{\Delta}_{T,R}-\pi^{-1}(A_{r,r'})$ are infinite. 
\end{remark}

\begin{definition}\label{def: Lrr}
Suppose that for $rr'\in T$, $\Sigma_{R}-(\sigma_{r}\cup \sigma_{r'})$ contains exactly one infinite component, and let $A_{r,r'}$ be as in Definition \ref{def: Arr}. Let $\mathcal{L}_{r,r'}$ be the finite set of loops in $\Sigma_{R}$ of the form $k\sigma_{\rho}$, where $k\in K_{R}$, $\rho\in R$, and $k\sigma_{\rho}\cap A_{r,r'}$ is non empty. 
\end{definition}

There is a uniform upper bound $L$ to the size of $\mathcal{L}_{r,r'}$ as $rr'$ ranges across $T$. 

\begin{remark}
  Suppose that $\Sigma_{R}-A_{r,r'}$ consists of exactly one component. Then given any $x\in \Sigma_{R}-A_{r,r'}$ and any component $C$ of $\hat{\Delta}_{T,R}-\pi^{-1}(A_{r,r'})$, there exists $\hat{x}\in C$ with $\pi (\hat{x})=x.$
\end{remark}
We now define a particular element of $Q(T,R).$

\begin{definition}\label{def: the element q(x,y)}
Let $rr'\in T$ and suppose that $\Sigma_{R}-(\sigma_{r}\cup \sigma_{r'})$ contains exactly one infinite component. Let $A_{r,r'}$ be given as in Definition \ref{def: Arr}. Given $\hat{x},\hat{y}$ in $\hat{\Delta}_{T,R}-\pi^{-1}(A_{r,r'})$ with $\pi(\hat{x})=\pi(\hat{y})$, let $\boldsymbol{\underline{q}}(\hat{x},\hat{y})$ be the element of $Q(T,R)$ defined as follows. Let $\delta$ be a path in $\hat{\Delta}_{T,R}$ between $\hat{x}$ and $\hat{y}$, so that $\pi(\delta)$ is a loop in $\Sigma_{R}-A_{r,r'}$, and therefore gives and element $\phi_{\delta}$ in $\langle \langle R\rangle\rangle.$ The element $\boldsymbol{\underline{q}}(x,y)$ is defined as $P_{T,R}(\phi_{\delta})$.
\end{definition}
Note that the above definition does not depend on the choice of the path $\delta$. This element encodes a lot of information about the components of $\hat{\Delta}_{T,R}-\pi^{-1}(A_{r,r'})$, as we now show.

\begin{lemma}\label{lem: determining components}
 Let $rr'\in T$ be such that $\Sigma_{R}-(\sigma_{r}\cup \sigma_{r'})$ contains exactly one infinite component. Let $A_{r,r'}$ be given as in Definition $\ref{def: Arr}$ and $\boldsymbol{\underline{q}}(x,y)$ be given as in Definition \ref{def: the element q(x,y)}. Suppose that $\hat{x},\hat{x}',\hat{y},\hat{y}'$ are such that:
\begin{itemize}
    \item $\hat{x},\hat{x}'$ lie in the same component of $\hat{\Delta}_{T,R}-\pi^{-1}(A_{r,r'})$, and
    \item we have $\boldsymbol{\underline{q}}(\hat{x},\hat{y})=_{Q(T,R)}\boldsymbol{\underline{q}}(\hat{x}',\hat{y}')$. 
\end{itemize} Then the points $\hat{y},\hat{y}'$ lie in the same component of $\hat{\Delta}_{T,R}-\pi^{-1}(A_{r,r'})$. 
\end{lemma}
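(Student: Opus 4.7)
The plan is to show that $\underline{\boldsymbol{q}}(\hat{x},\hat{y})$ is exactly the unique deck transformation of the normal cover $\pi:\hat{\Delta}_{T,R}\looparrowright\Sigma_{R}$ which sends $\hat{x}$ to $\hat{y}$, and then to invoke the fact that the deck group preserves $\pi^{-1}(A_{r,r'})$ and hence permutes the components of its complement. Having done this, the conclusion follows immediately by applying the common deck transformation $q=\underline{\boldsymbol{q}}(\hat{x},\hat{y})=\underline{\boldsymbol{q}}(\hat{x}',\hat{y}')$ to the component containing both $\hat{x}$ and $\hat{x}'$.

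First I would verify the identification. Let $\delta$ be any path in $\hat{\Delta}_{T,R}$ from $\hat{x}$ to $\hat{y}$; since $\pi(\hat{x})=\pi(\hat{y})$, its image $\pi(\delta)$ is a loop in $\Sigma_{R}$ representing an element $\phi_{\delta}\in\pi_{1}(\Sigma_{R})=\langle\langle R\rangle\rangle$. Under the usual dictionary between normal subgroups of $\pi_{1}(\Sigma_{R})$ and normal covers of $\Sigma_{R}$, the deck group of $\pi$ is identified with the quotient $\pi_{1}(\Sigma_{R})/\pi_{1}(\hat{\Delta}_{T,R})=\langle\langle R\rangle\rangle/\ker(P_{T,R})=Q(T,R)$, and the unique deck transformation carrying $\hat{x}$ to $\hat{y}$ is precisely the class of $\phi_{\delta}$, namely $P_{T,R}(\phi_{\delta})=\underline{\boldsymbol{q}}(\hat{x},\hat{y})$. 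The path-independence of $\underline{\boldsymbol{q}}(\hat{x},\hat{y})$ noted right after Definition~\ref{def: the element q(x,y)} is consistent with this, since any two paths from $\hat{x}$ to $\hat{y}$ differ by a loop in $\hat{\Delta}_{T,R}$, and such loops lie in $\ker(P_{T,R})$.

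With this interpretation in hand, the conclusion is nearly automatic. Write $q:=\underline{\boldsymbol{q}}(\hat{x},\hat{y})=\underline{\boldsymbol{q}}(\hat{x}',\hat{y}')$, so that $q\cdot\hat{x}=\hat{y}$ and $q\cdot\hat{x}'=\hat{y}'$. Every element of the deck group is a graph automorphism of $\hat{\Delta}_{T,R}$ commuting with $\pi$, so it preserves the set $\pi^{-1}(A_{r,r'})$ and therefore permutes the connected components of $\hat{\Delta}_{T,R}-\pi^{-1}(A_{r,r'})$. In particular, $q$ sends the component containing both $\hat{x}$ and $\hat{x}'$ to a single component containing both $q\cdot\hat{x}=\hat{y}$ and $q\cdot\hat{x}'=\hat{y}'$, as required.

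The only point to be slightly careful about is that $\hat{x}$ and $\hat{x}'$ need not lie in the same fibre of $\pi$, so one has to be sure that a single element of $Q(T,R)$ witnesses both $q\cdot\hat{x}=\hat{y}$ and $q\cdot\hat{x}'=\hat{y}'$ simultaneously; but this is exactly what the global action of the deck group on $\hat{\Delta}_{T,R}$ gives for free. Consequently, I do not expect any real obstacle: the content of the lemma reduces to the standard covering-space identification of $\underline{\boldsymbol{q}}$ with an element of the deck group.
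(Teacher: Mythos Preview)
Your argument is correct. The identification of $\underline{\boldsymbol{q}}(\hat{x},\hat{y})$ with the unique deck transformation carrying $\hat{x}$ to $\hat{y}$ is valid; the only subtlety, which you flag, is that the isomorphism between $Q(T,R)$ and the deck group is a priori basepoint-dependent, but since $Q(T,R)$ is abelian the change-of-basepoint conjugation is trivial and the identification is canonical. Hence the same element $q\in Q(T,R)$ simultaneously witnesses $q\cdot\hat{x}=\hat{y}$ and $q\cdot\hat{x}'=\hat{y}'$, and the rest is immediate.

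This is a genuinely different route from the paper's proof. The paper argues explicitly at the level of paths: it takes a path $\delta(\hat{x}',\hat{x})$ in $\hat{\Delta}_{T,R}-\pi^{-1}(A_{r,r'})$ witnessing that $\hat{x}$ and $\hat{x}'$ lie in the same component, lifts $\pi(\delta(\hat{x}',\hat{x}))$ to a path $\delta'$ starting at $\hat{y}'$, concatenates to form $\zeta=\delta(\hat{y},\hat{x})\,\delta(\hat{x},\hat{x}')\,\delta(\hat{x}',\hat{y}')\,\delta'$, and checks by direct computation that $P_{T,R}(\phi_{\zeta})=\underline{\boldsymbol{0}}$, so $\zeta$ is a loop and $\delta'$ connects $\hat{y}'$ to $\hat{y}$ inside the complement. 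Your deck-group argument packages exactly this computation: the lifted path $\delta'$ in the paper is nothing other than $q$ applied to $\delta(\hat{x}',\hat{x})$. What your approach buys is clarity and brevity; what the paper's approach buys is an explicit path that one can see avoids $\pi^{-1}(A_{r,r'})$, without needing to invoke the basepoint-independence of the deck-group identification.
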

\begin{proof}
Since $\hat{x}$ and $\hat{x}'$ lie in the same component of $\hat{\Delta}_{T,R}-\pi^{-1}(A_{r,r'})$, there exists a path $\delta(\hat{x}',\hat{x})$ in $\hat{\Delta}_{T,R}$ from $\hat{x}'$ to $\hat{x}$ not intersecting $\pi^{-1}(A_{r,r'})$. Write $\delta(\hat{x},\hat{x}')$ for the inverse of $\delta(\hat{x}',\hat{x})$

Let $\delta(\hat{x},\hat{y})$ be a path in $\hat{\Delta}_{T,R}$ from $\hat{x}$ to $\hat{y}$ with inverse $\delta(\hat{y},\hat{x})$, and define similarly  $\delta(\hat{x}',\hat{y}')$, $\delta(\hat{y}',\hat{x}')$.

Let $\delta'$ be the lift of $\pi(\delta(\hat{x}',\hat{x}))$ to $\hat{\Delta}_{T,R}$ starting at $\hat{y}'$. This gives a path in $\hat{\Delta}_{T,R}$ starting at $\hat{y}$, defined as $$\zeta=\delta(\hat{y},\hat{x})\delta(\hat{x},\hat{x}')\delta(\hat{x}'\hat{y}')\delta'.$$
In particular, we go from $\hat{y}$ to $\hat{x}$, then backwards along $\delta(\hat{x}',\hat{x})$ to $\hat{x'}$ then to $\hat{y}'$ and then along $\delta$. Note that $\pi(\zeta)$ is a loop in $\Sigma_{R}$ and gives an element $\phi_{\zeta}\in \langle \langle R\rangle \rangle$. We similarly obtain elements $\phi_{\delta(\hat{x},\hat{x}'})$ etc. 

By the definition of $\boldsymbol{\underline{q}}(\hat{x},\hat{y})$, and by the assumptions in the statement of the lemma, $$P_{T,R}(\phi_{\delta(\hat{x},\hat{y})})=\boldsymbol{\underline{q}}(\hat{x},\hat{y})=\boldsymbol{\underline{q}}(\hat{x}',\hat{y}')=P_{T,R}(\phi_{\delta(\hat{x}',\hat{y}')}).$$ Furthermore, $\pi(\delta')=\pi(\delta(\hat{x}',\hat{x})).$

Since we are taking homology with $\mathbb{Z}\slash 2\mathbb{Z}$ coefficients,
we immediately deduce that
\begin{align*}
    P_{T,R}(\phi_{\zeta})&=_{Q(T,R)}\boldsymbol{\underline{0}}
\end{align*}

Hence $\pi(\zeta)$ lies in $\ker(P_{T,R})$, and so $\zeta$ is a loop in $\hat{\Delta}_{T,R}$. Therefore $\delta'$ connects $\hat{y}$ and $\hat{y}'$ in $\hat{\Delta}_{T,R}$. Since $\delta(\hat{x},\hat{x'})$ does not intersect $\pi^{-1}(A_{r,r'})$, neither does $\delta'$ and so $\delta'$ is a path between $\hat{y}'$ and $\hat{y}$ not intersecting $\pi^{-1}(A_{r,r'}).$ In particular, $\hat{y}$ and $\hat{y}'$ lie in the same component of $\hat{\Delta}_{T,R}-\pi^{-1}(A_{r,r'})$.
\end{proof}

\begin{remark}

Suppose that $\Sigma_{R}-(\sigma_{r}\cup \sigma_{r'})$ contains exactly one infinite component, so that $\Sigma_{R}-A_{r,r'}$ is connected. Consider the components of $\hat{\Delta}_{T,R}-\pi^{-1}(A_{r,r'})$, of which there are at least two by Lemma \ref{lem: preimages are separating}. Let $\hat{x}$ be a vertex of $\hat{\Delta}_{T,R}-\pi^{-1}(A_{r,r'})$. If the loop $(\kappa\gamma_{\rho})$ is not an element of $\mathcal{L}_{r,r'}$, let $\sigma$ be a simple path in $\Sigma_{R}-A_{r,r'}$ from $\pi(\hat{x})$ to the start point of $(\kappa\gamma_{\rho})$, which exists as $\Sigma_{R}-A_{r,r'}$ is connected. Then the lift of the path $\sigma (\kappa\gamma_{\rho})\sigma^{-1}$ starting at $\hat{x}$ ends at an element $\hat{y}$ with $\boldsymbol{\underline{q}}(\hat{x},\hat{y})=\Psi (\kappa\gamma_{\rho})$. 
\end{remark}

In particular, by repeatedly applying the above construction, we may deduce the the following lemma.

\begin{lemma}
    Let $t=rr'\in T$, and let $\hat{x}$ be a point in $\hat{\Delta}_{T,R}-\pi^{-1}(A_{r,r'}).$ Let $\boldsymbol{\underline{q}}\in Q(T,R)$ with $$\langle \boldsymbol{\underline{q}},\Psi(\kappa\sigma_{\rho})\rangle=0$$
for all $\kappa\sigma_{\rho}\in L_{r,r'}$. Then there exists a point $\hat{y}$ in the same connected component of $\hat{\Delta}_{T,R}-\pi^{-1}(A_{r,r'})$ as $\hat{x}$, and with 
$$\boldsymbol{\underline{q}}(\hat{x},\hat{y})=_{Q(T,R)}\boldsymbol{\underline{q}}.$$
\end{lemma}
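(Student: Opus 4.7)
The plan is to expand $\boldsymbol{\underline{q}}$ in the basis of $Q(T,R)$ furnished by Lemma \ref{lem: relations in Q(T,R)}, realise each basis vector individually via the construction from the remark immediately preceding the statement, and then concatenate the resulting paths.

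First, I would write $\boldsymbol{\underline{q}}=\sum_{i=1}^{n}\Psi(\kappa_i\overline{\rho_i})$ as a finite sum of basis vectors, where each $\rho_i$ is a fixed half of some relator $\rho_i\rho_i'\in T$. The hypothesis $\langle\boldsymbol{\underline{q}},\Psi(\kappa\sigma_\rho)\rangle=0$ for all $\kappa\sigma_\rho\in\mathcal{L}_{r,r'}$, combined with the identification $\Psi(\kappa\overline{\rho})=\Psi(\kappa\overline{\rho'})$ from Lemma \ref{lem: relations in Q(T,R)}, forces every basis element occurring in this expansion with nonzero coefficient to have both associated loops $\kappa_i\sigma_{\rho_i}$ and $\kappa_i\sigma_{\rho_i'}$ disjoint from $A_{r,r'}$, i.e.\ neither lies in $\mathcal{L}_{r,r'}$. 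In particular, the loop $\kappa_i\sigma_{\rho_i}$ is available as input to the construction in the preceding remark.

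Next, I would inductively produce points $\hat{x}=\hat{y}_0,\hat{y}_1,\ldots,\hat{y}_n$ lying in the same component $C$ of $\hat{\Delta}_{T,R}-\pi^{-1}(A_{r,r'})$ as $\hat{x}$, with $\pi(\hat{y}_i)=\pi(\hat{x})$ for every $i$. Given $\hat{y}_{i-1}$, I would apply the remark: pick a simple path $\sigma$ in $\Sigma_R-A_{r,r'}$ from $\pi(\hat{x})$ to the basepoint of $\kappa_i\sigma_{\rho_i}$ (which exists because $\Sigma_R-A_{r,r'}$ is connected), then lift $\sigma(\kappa_i\sigma_{\rho_i})\sigma^{-1}$ starting at $\hat{y}_{i-1}$. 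By the remark, the endpoint $\hat{y}_i$ of this lift satisfies $\boldsymbol{\underline{q}}(\hat{y}_{i-1},\hat{y}_i)=\Psi(\kappa_i\overline{\rho_i})$; since the lift projects into $\Sigma_R-A_{r,r'}$, it stays inside $\hat{\Delta}_{T,R}-\pi^{-1}(A_{r,r'})$, whence $\hat{y}_i\in C$.

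Finally, concatenating the $n$ lifts gives a path from $\hat{x}$ to $\hat{y}:=\hat{y}_n$ contained in $C$. Because $P_{T,R}$ is a homomorphism and we work with $\mathbb{Z}\slash 2\mathbb{Z}$ coefficients, $\boldsymbol{\underline{q}}$ is additive under concatenation, so $\boldsymbol{\underline{q}}(\hat{x},\hat{y})=\sum_{i=1}^{n}\Psi(\kappa_i\overline{\rho_i})=\boldsymbol{\underline{q}}$. I expect the subtlest point to be the first step: carefully unpacking the orthogonality hypothesis, using the identification $\Psi(\kappa\overline{\rho})=\Psi(\kappa\overline{\rho'})$ to conclude that the associated loops genuinely avoid $A_{r,r'}$. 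After that obstruction is handled, the rest is a direct iteration of the preceding remark.
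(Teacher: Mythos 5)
Your proposal is correct and is precisely the argument the paper intends: the paper itself leaves this lemma as an immediate consequence of "repeatedly applying the above construction," and your write-up simply makes that iteration explicit — expanding $\boldsymbol{\underline{q}}$ in the basis $\{\Psi(\kappa\overline{\rho})\}$, using the orthogonality hypothesis together with $\Psi(\kappa\overline{\rho})=\Psi(\kappa\overline{\rho'})$ to see that each contributing loop avoids $A_{r,r'}$, and concatenating the lifts, with additivity of $\boldsymbol{\underline{q}}(\cdot,\cdot)$ coming from $P_{T,R}$ being a homomorphism over $\mathbb{Z}\slash 2\mathbb{Z}$. No gaps; this matches the paper's route.
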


Therefore, given a point $\hat{x}$ in $\hat{\Delta}_{T,R}-\pi^{-1}(A_{r,r'})$, we see that the connected component of $\hat{\Delta}_{T,R}-\pi^{-1}(A_{r,r'})$ in which $\hat{x}$ lies is uniquely determined by $\langle \boldsymbol{\underline{q}}(\hat{x},\hat{y}),\Psi (k\gamma_{\rho})\rangle$, where $\hat{y}$ ranges overt the connected components of $\hat{\Delta}_{T,R}-\pi^{-1}(A_{r,r'})$ and $k\gamma_{\rho}$ ranges over $\mathcal{L}_{r,r'}$. In particular, by applying Lemma \ref{lem: determining components}, we may deduce the following. 

\begin{lemma}\label{lem: finitely many connected components}
   Suppose that $\Gamma_{R}-(\sigma_{r}\cup \sigma_{r'})$ contains exactly one infinite component. Let $A_{r,r'}$ be defined as in Definition \ref{def: Arr} and $\mathcal{L}_{r,r'}$ be defined as in Definition \ref{def: Lrr}. There are at most $2^{\vert \mathcal{L}_{r,r'}\vert}$ connected components of $\hat{\Delta}_{T,R}-\pi^{-1}(A_{r,r'})$.
\end{lemma}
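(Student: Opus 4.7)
The plan is to define an injection from the set of connected components of $\hat{\Delta}_{T,R}-\pi^{-1}(A_{r,r'})$ into $\mathbb{F}_{2}^{\vert \mathcal{L}_{r,r'}\vert}$, whose cardinality is $2^{\vert \mathcal{L}_{r,r'}\vert}$. The key idea is that the preceding two lemmas together say that a component is uniquely characterised by the pairing $\langle \boldsymbol{\underline{q}}(\cdot,\cdot),\Psi(\kappa\sigma_{\rho})\rangle$ as $\kappa\sigma_{\rho}$ ranges over the finite set $\mathcal{L}_{r,r'}$.

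First I would fix a basepoint $\hat{x}_{0}$ in $\hat{\Delta}_{T,R}-\pi^{-1}(A_{r,r'})$. Using the remark preceding Definition \ref{def: the element q(x,y)}, for every connected component $C$ of $\hat{\Delta}_{T,R}-\pi^{-1}(A_{r,r'})$ I can choose a vertex $\hat{x}_{C}\in C$ with $\pi(\hat{x}_{C})=\pi(\hat{x}_{0})$, so that $\boldsymbol{\underline{q}}(\hat{x}_{0},\hat{x}_{C})\in Q(T,R)$ is well defined. I would then define the map
$$\Phi: C\mapsto \bigl(\langle \boldsymbol{\underline{q}}(\hat{x}_{0},\hat{x}_{C}),\Psi(\kappa\sigma_{\rho})\rangle\bigr)_{\kappa\sigma_{\rho}\in \mathcal{L}_{r,r'}}\in \mathbb{F}_{2}^{\vert \mathcal{L}_{r,r'}\vert}.$$

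To show $\Phi$ is injective, suppose $\Phi(C)=\Phi(C')$. By concatenating paths and working modulo $2$, the element $\boldsymbol{\underline{q}}(\hat{x}_{C},\hat{x}_{C'})=\boldsymbol{\underline{q}}(\hat{x}_{0},\hat{x}_{C})+\boldsymbol{\underline{q}}(\hat{x}_{0},\hat{x}_{C'})$ is orthogonal to every $\Psi(\kappa\sigma_{\rho})$ with $\kappa\sigma_{\rho}\in \mathcal{L}_{r,r'}$. Applying the unnamed lemma immediately preceding this one to the point $\hat{x}_{C}$ and the class $\boldsymbol{\underline{q}}(\hat{x}_{C},\hat{x}_{C'})$, I obtain a point $\hat{y}$ lying in the same component as $\hat{x}_{C}$ (i.e.\ in $C$) with $\boldsymbol{\underline{q}}(\hat{x}_{C},\hat{y})=\boldsymbol{\underline{q}}(\hat{x}_{C},\hat{x}_{C'})$. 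Lemma \ref{lem: determining components}, applied with the pair $(\hat{x}_{C},\hat{x}_{C})$ in place of $(\hat{x},\hat{x}')$ and $(\hat{y},\hat{x}_{C'})$ in place of $(\hat{y},\hat{y}')$, then forces $\hat{y}$ and $\hat{x}_{C'}$ to lie in the same component. Hence $C=C'$, giving injectivity.

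The main obstacle will be a bookkeeping one rather than a mathematical one: checking that $\boldsymbol{\underline{q}}(\cdot,\cdot)$ behaves additively under concatenation so that $\boldsymbol{\underline{q}}(\hat{x}_{C},\hat{x}_{C'})$ is well defined and expressible via the chosen basepoint, and pairing the two applied lemmas with precisely the right choices of points. Once this bookkeeping is in place, the bound $2^{\vert \mathcal{L}_{r,r'}\vert}$ on the image of $\Phi$ gives the desired bound on the number of components.
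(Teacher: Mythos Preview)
Your proposal is correct and is precisely the argument the paper has in mind: the paper simply asserts, in the sentence preceding the lemma, that the component containing a point is determined by the pairings $\langle \boldsymbol{\underline{q}}(\cdot,\cdot),\Psi(\kappa\sigma_{\rho})\rangle$ for $\kappa\sigma_{\rho}\in\mathcal{L}_{r,r'}$ via Lemma~\ref{lem: determining components} and the unnamed lemma, and you have written out explicitly the injection into $\mathbb{F}_{2}^{\vert\mathcal{L}_{r,r'}\vert}$ that makes this rigorous. The additivity $\boldsymbol{\underline{q}}(\hat{x}_{C},\hat{x}_{C'})=\boldsymbol{\underline{q}}(\hat{x}_{0},\hat{x}_{C})+\boldsymbol{\underline{q}}(\hat{x}_{0},\hat{x}_{C'})$ and the requisite $\pi$-fibre conditions all hold for the reasons you indicate, so the bookkeeping goes through without difficulty.
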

Now, we begin to discuss the CAT(0) cube complex on which $G_{T}$ acts. The main element of interest is the following: given a space $X$, a \emph{wall} is a pair $\{U,V\}$ where $U\cup V=X.$

\begin{remark}\label{remark: the wallspace}
Suppose that $\Sigma_{R}-(\sigma_{r}\cup \sigma_{r'})$ contains exactly one infinite component for each $rr'\in T$. Each translate $(k\sigma_{r}\cup k\sigma_{r'})$ for $k\in K_{R}$ and $rr'\in T$ determines some finite number, $N_{r,r'}$, of \emph{walls} $$\Lambda^{i}_{k,r,r'}=\bigg\{U^{i}_{k,r,r'}\cup \pi^{-1}(kA_{r,r'}),V^{i}_{k,r,r'}\cup \pi^{-1}(kA_{r,r'})\bigg\},$$ where $\{U^{i}_{k,r,r'},V^{i}_{k,r,r'}\}$ ranges over the finite number of non-trivial partitions of connected components of $\hat{\Delta}_{T,R}-\pi^{-1}(kA_{r,r'})$. There are finitely many such choices by Lemma \ref{lem: finitely many connected components}. There is clearly a uniform upper bound $N$ to the number $N_{r,r'}$ as $rr'$ ranges over $T$.
\end{remark}

We will say that two walls $\{U,V\},\{U',V'\}$ are \emph{transverse} if each of $U\cap U',\; U\cap V',\;V\cap U, V\cap V'$ are non empty.
A wall $\{U,V\}$ \emph{separates} two points $x,y$ if $x\in U-V$ and $y\in V-U$ (or vice versa). 

\begin{remark}\label{remark: finitely many transverse}
Note that two walls $\Lambda^{i}_{k,r,r'}$ and $\Lambda^{j}_{\kappa,\rho,\rho'}$ are transverse only if $kA_{r,r'}\cap \kappa A_{\rho,\rho'}$ is non empty. Since $A_{r,r'}$ has diameter bounded above by some uniform constant $D$, there is a finite upper bound to the size of any collection of pairwise intersecting $\kappa_{i}A_{r,r'}$. Since each $kA_{r,r'}$ gives a bounded number of walls of the form $\Lambda^{i}_{k,r,r'}$, there is a finite upper bound $M$ to the number of pairwise transverse walls. 
\end{remark}
\begin{definition}
Suppose that $\Sigma_{R}-(\sigma_{r}\cup\sigma_{r'})$ contains exactly one infinite component for each $rr'\in T.$ Let $\Lambda^{i}_{k,r,r'}$ be defined as in Remark \ref{remark: the wallspace}. We define the collection of walls of $\hat{\Delta}_{T,R}$, $$\mathcal{W}=\{\Lambda^{i}_{k,r,r'}\;:\;k\in K_{R},\; r,r'\in R,\;rr'\in T\}.$$ 
\end{definition}Note that $\hat{H}_{T,R}$ acts on a wall $\Lambda^{i}_{g,r,r'}$ in a clear manner, and $\hat{H}_{T,R}\mathcal{W}=\mathcal{W}$.

\begin{remark}\label{remark: the dual cube complex}
The graph $\hat{\Delta}_{T,R}$ is locally finite, and so Remark \ref{remark: finitely many transverse} on the size of any pariwise transverse collection of $\Lambda^{i}_{k,r,r'}$ implies that the pair $(\hat{\Delta}_{T,R},\mathcal{W})$ is a \emph{wallspace} in the language of Hruska--Wise \cite[$\S 2.2$]{Hruska-Wise}. There exists a canonical CAT(0) cube complex, $\mathcal{C}=\mathcal{C}(\hat{\Delta}_{T,R},\mathcal{W})$, on which $\hat{H}_{T,R}$ acts by isometries, built from $(\hat{\Delta}_{T,R},\mathcal{W})$: see \cite[§3]{Hruska-Wise} for an overview of its construction. Since any collection of pairwise transverse $\Lambda$ in $\mathcal{W}$ has cardinality at most $M$ by Remark \ref{remark: finitely many transverse}, the cube complex $\mathcal{C}$ is finite dimensional \cite[Corollary 3.13]{Hruska-Wise}. 
\end{remark}

\begin{definition}
For two points $\hat{x},\hat{y}$ in $\hat{\Delta}_{T,R}$, let $\#(\hat{x},\hat{y})$ be the (finite) number of $\Lambda\in\mathcal{W}$ such that $\hat{x}$ and $\hat{y}$ are separated by $\Lambda$. 
\end{definition}
\begin{remark}\label{remark: points with distance equal to separation}
    Given $\hat{x},\hat{y}\in\hat{\Delta}_{T,R}$ and $h\in \hat{H}_{T,R}$ with $h\hat{x}=\hat{y}$, there exist points $c_{\hat{x}},c_{\hat{y}}\in \mathcal{C}$ with $d_{\mathcal{C}}(c_{\hat{x}},c_{\hat{y}})=\#(\hat{x},\hat{y})$ and $hc_{\hat{x}}=c_{\hat{y}}$ \cite[p. 480]{Hruska-Wise}. 
\end{remark}

We may now prove Theorem \ref{thmalph: well separated implies no (T)}.
\begin{reptheorem}{thmalph: well separated implies no (T)}
Let $G$ be an aspherically relator separated group. Then $G$ acts with unbounded orbits on a finite dimensional CAT(0) cube complex, and hence does not have Property (T).
\end{reptheorem}
\begin{proof}[Proof of Theorem \ref{thmalph: well separated implies no (T)}]

If $G=\langle S\vert T\rangle$ is aspherically relator separated, we may assume that $\vert S\cup S^{-1}\vert \geq 4$, as otherwise $G$ is cyclic. Let $G_{T},$ $\Gamma_{T},$ $K_{r},$ $\Sigma_{R},$ $\hat{H}_{T,R}$, $\hat{\Delta}_{T,R},$ $\Sigma_{R},$ be defined by our choice of aspherically relator separated group $G=\langle S\vert T\rangle$ and our fixed partition of each element $t_{i}=r_{i}r_{i}'\in T$.

For $r\in R$, let $\sigma_{r}$ be defined as in Definition \ref{def: the loop sigma r}, i.e. the loop in $\Sigma_{R}$ based at the identity and read by $r$. First suppose that $\Sigma_{R}-(\sigma_{r}\cup \sigma_{r'})$ contains exactly one infinite component for each $rr'\in T$. Let $A_{r,r'},\;\Lambda^{i}_{g,r,r'}$, $\mathcal{W}$ be as described in Lemma \ref{lem: preimages are separating}, Lemma \ref{lem: finitely many connected components}, and Remark \ref{remark: the wallspace}. Consider the CAT(0) cube complex $\mathcal{C}(\hat{\Delta}_{R},\mathcal{W}).$

We know that $\Sigma_{R}-A_{r,r'}$ is connected for each $rr'\in T$. As discussed previously in Remark \ref{remark: the wallspace}, for each $k\in K_{R}$ and $rr'\in T$, the set $kA_{r,r'}$ gives us the finite collection of walls $\{\Lambda^{i}_{k,r,r'}\}$. 

Let $n\geq 1$. Since $K_{R}$ (and therefore $\Sigma_{R}$) is infinite, and $K_{R}$ has at least two generators, we may choose a simple path $\sigma$ with endpoints $x,y\in V(\Sigma_{R})$ such that: after removing loops from $\sigma$ the resulting path is simple; there exist translates $k_{i_{1}}A_{r_{i_{1}},r'_{i_{1}}},\hdots, k_{i_{n}}A_{r_{i_{n}},r_{i_{n}}'}$ pairwise disjoint such that $\sigma$ contains exactly one loop from each of  $\{k_{i_{1}}\sigma_{r_{i_{1}}},k_{i_{1}}\sigma_{r'_{i_{1}}}\},\hdots, \{k_{i_{n}}\sigma_{r_{i_{n}}},k_{i_{n}}\sigma_{r'_{i_{n}}}\}$; and no other loops appear in $\sigma.$ Then $\sigma$ lifts to a path $\hat{\sigma}$ in $\hat{\Delta}_{T,R}$ with endpoints $\hat{x},\hat{y}$ in $V(\hat{\Delta}_{T,R})$. By Lemma \ref{lem: preimages are separating}, $\hat{x}$ and $\hat{y}$ are separated by the walls $\Lambda^{j_{1}}_{k_{i_{1}},r_{i_{1}},r'_{i_{1}}},\hdots , \Lambda^{j_{n}}_{k_{i_{n}},r_{i_{n}},r'_{i_{n}}}$ for appropriate choices of $j_{1},\hdots, j_{n}$, so that $\#(\hat{x},\hat{y})\geq n$. As $\hat{\Delta}_{T,R}$ is the Cayley graph of $\hat{H}_{T,R}$, there exists $h\in \hat{H}_{T,R}$ with $h\hat{x}=\hat{y}.$ Therefore, by Remark \ref{remark: points with distance equal to separation}, for all $n\geq 1$ there exist points $c_{\hat{x}},c_{\hat{y}}\in \mathcal{C}$ and $h\in \hat{H}_{T,R}$ with $d_{\mathcal{C}}(c_{\hat{x}},c_{\hat{y}})\geq n$ and $hc_{\hat{x}}=c_{\hat{y}}$, and so $\hat{H}_{T,R}$ acts on $\mathcal{C}$ with unbounded orbits. 
    
If the unbounded action of $\hat{H}_{T,R}$ on $\mathcal{C}$ is given by the homomorphism $\alpha:\hat{H}_{T,R}\rightarrow Isom (\mathcal{C})$, then $G=G_{T}$ also acts with unbounded orbits on $\mathcal{C}$, via the action $G_{T}\twoheadrightarrow \hat{H}_{T,R}\xrightarrow{\alpha} Isom (\mathcal{C})$.

If $\Sigma_{R}-(\sigma_{r}\cup \sigma_{r'})$ contains at least two infinite components for some $rr'\in T$, then we may repeat the above argument using $\Sigma_{R}-(\sigma_{r}\cup \sigma_{r'})$ in place of $\hat{\Delta}_{T,R}-\pi^{-1}(\sigma_{r}\cup\sigma_{r'})$ to build a wallspace $(\Sigma_{R},\mathcal{W}')$. In particular, for each $k\in K_{R}$, we take the partitions of the (necessarily finite number of) components of $\Sigma_{R}-k(\sigma_{r}\cup\sigma_{r'})$ as our walls, which gives us the wallspace $(\Sigma_{R},\mathcal{W'})$. We then similarly deduce that the group $K_{R}$ acts with unbounded orbits on the CAT(0) cube complex $\mathcal{C}(\Sigma_{R},\mathcal{W}').$
\end{proof}

\section{Random groups are aspherically relator separated}\label{sec: Random groups are aspherically relator separated}

We now turn to proving Lemma \ref{lem: random groups are well separated}. We first introduce some auxiliary models of random groups to model the behaviour of $\mathcal{H}(G)$ for $G\sim G(n,l,d)$. For $t, t'^{-1}\in S\sqcup S^{-1}$, let $W(t,t')$ be the set of words in $F(S)$ starting with $t$ and ending with $t'$.

Essentially we have the following issue. Given $G=\langle S\vert T\rangle \sim G(n,2l,d)$, we would like to say that $\mathcal{H}(G)=\langle S\vert R\rangle\sim G(n,l,2d)$. Recall that given $T=\{t_{1},\hdots ,t_{N}\}$, we will fix for each $i$ the partition $t_{i}=r_{i}r_{i}'$ with $\vert r_{i}\vert=\vert r_{i}'\vert$ if $\vert t_{i}\vert$ is even, and $\vert r_{i}\vert=\vert r_{i}\vert-1$ otherwise. In particular, $\mathcal{H}(G)=\langle S\vert R\rangle$ for $R=\{r_{1},r_{1}',\hdots ,r_{N},r_{N}'\}$ for $r_{i},r_{i}'$ as described.

However, suppose that there exists some word $r\in R\cap W(t_{1},t_{1}')$. Then there exists a reduced word $r'\in R$ such that the word $t=rr'$ is cyclically reduced without cancellation and lies in $T$ (or possibly $t=r'r$). Therefore $r'\in W(t_{2},t_{2}')$ for some $t_{2}^{-1}\neq t_{1}',\;t_{1}^{-1}\neq t_{2}'$. In particular, the relator set for $\mathcal{H}(G)$ is not chosen uniformly at random. We are therefore required to take a slight diversion for technical purposes. In essence, we show that we can extend the relator set $R$ slightly to get a `random enough' group presentation. We define:\begin{itemize}
   \item $\Theta(n,l,d)$ by choosing uniformly at random a set $R_{l}$ of $2(2n-1)^{ld}$ reduced words of length $l$, and considering $H=F_{n}\slash \langle \langle R_{l}\rangle \rangle,$

\item $\Omega(n,l,d)$ by choosing uniformly at random a set $R_{l}$ of $(2n-1)^{ld}$ reduced words of length $l$ and a set $R_{l+1}$ of $(2n-1)^{ld}$ reduced words of length $l+1$, and considering $H=F_{n}\slash \langle \langle R_{l}\cup R_{l+1}\rangle \rangle,$

\item $\overline{\Theta}(n,l,d)$ by choosing $\langle S\vert R_{l}\rangle\sim \Theta (n,l,d)$ conditioned on, for all $t, t^{-1},$  $\vert R_{l}\cap W(t,t')\vert=2(2n-1)^{ld}(2n)^{-2}.$

    \item $\overline{\Omega}(n,l,d)$ by choosing $\langle S\vert R_{l}\sqcup R_{l+1}\rangle\sim \Omega(n,l,d)$ conditioned on, for all $t, t^{-1},$ $\vert R_{l}\cap W(t,t')\vert=\vert R_{l+1}\cap W(t,t')\vert=(2n-1)^{ld}(2n)^{-2}.$
\end{itemize}
Let $f,g:\mathbb{N}\rightarrow \mathbb{R}_{+}$: we write  $f=o_{m}(g)$ if $f(m)\slash g(m)\rightarrow 0$ as $m\rightarrow\infty$, and $f=O_{m}(g)$ if there exist constants $N\geq 0$ and $M\geq 1$ such that $f(m)\leq N g(m)$ for all $m\geq M$. We now discuss spherical diagrams. 
\begin{definition}
Let $G=\langle S\vert  R\rangle$ be a group. A \emph{spherical diagram} for $G$ is a finite cellular decomposition of the sphere, $D$, such that:
\begin{enumerate}[label=$\roman*)$]
    \item Each (oriented) $1$-cell of $D$ is labelled by some $s\in S$ such that if $e$ is labelled by $s$ then $e^{-1}$ is labelled by $s^{-1}$.
    \item Each $2$-cell $C$ has a marked start point and orientation. Reading the boundary of $C$ from the marked point and concatenating edge labels, the word obtained is reduced without cancellation and is equal to some $r\in R$. 
\end{enumerate}
 A spherical diagram $D$ is \emph{unreduced} if $D$ contains $2$-cells $C_{1},C_{2}$, such that: $C_{1}$ and $C_{2}$ bear the same relator with opposite orientations; and $C_{1}$ and $C_{2}$ both contain an edge $e\in D$ that represents the same letter in the relator (with respect to marked start points and orientations). 
\end{definition}

\begin{definition}
Following Gersten \cite{gersten1987reducible}, a presentation $G=\langle S\vert R\rangle$ is \emph{diagramatically reducible} if every spherical diagram for $G$ is either unreduced or consists of a single point.
\end{definition}

Note that being diagramatically reducible is preserved by removing relators from the set $R$. Importantly, if $G$ is diagramatically reducible, then it is aspherical \cite[Remark 3.2]{gersten1987reducible}. We have the following theorem due to Ollivier \cite{olliviersharp}. This was originally proved only for the $G(n,l,d)$ model, but the result follows similarly for $\Theta (n,l,d)$ and $\Omega(n,l,d)$ (see \cite[$\S I.2.c$]{Ollivier_Jan_Invitation} or \cite[Remark 1.4]{Ollivier-Wise}).

\begin{theorem}\label{thm: vKd reduced for random groups}\cite[pp. 614--615]{olliviersharp}
Fix $d<1\slash 2$ and let $G\sim G(n,l,d)$, $G\sim \Theta(n,l,d)$, or $G\sim \Omega(n,l,d)$. Then w.o.p. $G$ is infinite and diagramatically reducible.
\end{theorem}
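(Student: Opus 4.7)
The plan is to follow Ollivier's density-model argument, now applied to spherical (rather than disc) diagrams. Infiniteness at density $d<1/2$ is the classical content of Gromov's and Ollivier's sharp phase transition theorem: the linear isoperimetric inequality $\vert\partial D\vert\geq (1-2d-\varepsilon)l\vert D\vert$ for reduced disc diagrams $D$ implies w.o.p.\ that $G$ is non-elementary hyperbolic, and in particular infinite. The same inequality, proved by the same counting, holds verbatim in the $\Theta(n,l,d)$ and $\Omega(n,l,d)$ models: the relator sets differ in size only by a constant factor, and in the $\Omega$ model the two possible lengths $l,l+1$ differ only by one, neither of which affects the leading-order asymptotics.

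For diagrammatic reducibility I would argue by a direct first-moment calculation on reduced spherical diagrams. Suppose $D$ is a reduced spherical diagram for the random presentation with $k\geq 1$ two-cells. Since $D$ is a cell decomposition of $S^{2}$, which has no boundary, every edge is shared by exactly two cell-sides; summing cell perimeters gives $2E=\sum_{C}\vert\partial C\vert\leq (l+1)k$, so $E\leq (l+1)k/2$, where $E$ is the number of edges of $D$.

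The expected number of such diagrams is bounded by the product of (i) the number of combinatorial types of cellular sphere-decompositions with $k$ faces of perimeter $\leq l+1$, (ii) the number of generator-labellings of the edges of such a decomposition, and (iii) the probability that the resulting $k$ face boundary words all lie in the random relator set. For (i), standard planar-map enumeration bounds give at most $C^{k}$ for some absolute constant $C$. For (ii), edge-labellings are bounded by $(2n)^{E}\leq (2n)^{(l+1)k/2}$. For (iii), the probability that a fixed reduced word of length $l$ lies in a uniformly random set of $(2n-1)^{ld}$ such words is at most $(2n-1)^{-l(1-d)}$, so the probability that the $k$ boundary words all appear is at most $(2n-1)^{-lk(1-d)}$. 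Combining,
\[
\mathbb{E}\bigl[\#\text{reduced spherical diagrams with $k$ cells}\bigr]\leq C^{k}(2n)^{(l+1)k/2}(2n-1)^{-lk(1-d)}\leq \lambda^{k},
\]
with $\lambda=C\cdot (2n-1)^{l(d-1/2)+O(1)}$. For fixed $d<1/2$, $\lambda\to 0$ as $l\to\infty$, so summing over $k\geq 1$ gives an $o_{l}(1)$ bound; Markov's inequality then yields the claim w.o.p.

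The main obstacle will be the combinatorial bookkeeping in (i) and (ii) and, more subtly, verifying that the \emph{reduced} hypothesis is properly used to rule out pairs of cells glued across an edge in cancelling position, which would otherwise inflate $E$ beyond $(l+1)k/2$ through folding. For the conditional models $\overline{\Theta}$ and $\overline{\Omega}$ one notes that the conditioning event has probability bounded below by a quantity decaying at most polynomially in $l$ (by standard concentration on the empirical distribution of first and last letters), whereas the estimate above is super-polynomially small in $l$, so the conclusion survives the conditioning unchanged.
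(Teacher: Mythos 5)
The paper does not actually prove this statement: it is imported from Ollivier \cite{olliviersharp}, with only a one-line remark that the argument transfers to $\Theta(n,l,d)$ and $\Omega(n,l,d)$. So your sketch has to be measured against Ollivier's original argument, and your general route --- a first-moment count over reduced spherical diagrams --- is the right one. The infiniteness part is standard, and the arithmetic $(2n)^{(l+1)k/2}(2n-1)^{-lk(1-d)}=(2n-1)^{lk(d-1/2)+O(k)}$ is what one wants. (A minor point: your bound $C^{k}$ on the number of combinatorial types should really be $C^{k}l^{O(k)}$, to account for distributing edge-lengths among the $O(k)$ arcs of the $1$-skeleton; this is still subexponential in $l$ and harmless.)

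The genuine gap is in step (iii). Your bound $\mathbb{E}[\#]\leq(\#\text{labellings})\cdot(2n-1)^{-lk(1-d)}$ multiplies the number of labellings by the probability that $k$ \emph{distinct} words all land in the relator set. But a reduced spherical diagram may carry the same relator on several faces: reducedness only forbids an adjacent pair glued in cancelling position, not repetitions. For a labelling whose $k$ face-words comprise only $k'<k$ distinct words, the probability that all face-words lie in $R$ is roughly $(N/M)^{k'}$, which is \emph{larger} than $(N/M)^{k}$ since $N/M<1$; so the product (ii)$\times$(iii) is not an upper bound on the expectation. This repeated-relator case is exactly the delicate part of Ollivier's proof: one passes to decorated abstract diagrams recording which faces bear the same relator and shows that the loss of a factor $N/M$ per repeated face is compensated by the collapse in the number of admissible labellings (each identification between two copies of the same relator determines letters), and it is here --- not in the edge count $E\leq(l+1)k/2$ --- that reducedness is actually used. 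Your closing remark about the conditioned models is also off (and not needed for this statement, which concerns only $G(n,l,d)$, $\Theta$, $\Omega$): the exact-count event defining $\overline{\Theta}$ is a multinomial point probability with $\Theta((2n-1)^{ld})$ trials and is therefore exponentially small in $l$, not polynomially small, so dividing by it destroys your estimate; the paper instead treats $\overline{\Theta},\overline{\Omega}$ in Lemmas \ref{lem: asphericity of model condition} and \ref{lem: equivalence of models} by conditioning on an approximate-count event of probability $1-o_{l}(1)$ together with a monotonicity argument.
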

Using this, we may prove the following two lemmas. We will use the Chebyshev inequality \cite{Chebyshev}: if $X$ is a random variable with finite expected value $\mu$ and finite variance $\sigma^{2}$, then for any $\kappa>0$, $$P(\vert X-\mu\vert\geq \kappa\sigma)\leq \kappa^{-2}.$$
\begin{lemma}\label{lem: asphericity of model condition}
    Fix $d<1\slash 2$ and let $G=\langle S\vert R\rangle \sim \Theta(n,l,d)$, or $G=\langle S\vert R\rangle \sim \Omega(n,l,d)$, conditioned on 
    $$\bigg\vert\vert R\cap W(t,t')\vert-2(2n-1)^{ld}(2n)^{-2}\bigg\vert\leq (2n-1)^{3ld\slash 4}$$ for all $t,t'$. Then w.o.p. $G$ is infinite and diagramatically reducible.
\end{lemma}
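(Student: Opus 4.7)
The strategy is to deduce this from Theorem \ref{thm: vKd reduced for random groups} by showing that the conditioning event has probability tending to $1$. Let $B$ denote the event that $G$ is infinite and diagramatically reducible, and let $A$ denote the conditioning event from the hypothesis. Since
$$P(B\mid A)\geq 1-\frac{P(B^{c})}{P(A)},$$
and Theorem \ref{thm: vKd reduced for random groups} gives $P(B^{c})\to 0$ as $l\to\infty$ for the unconditioned distribution, it suffices to show $P(A)\to 1$ as $l\to\infty$.

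To this end, I would apply the Chebyshev inequality to each random variable $X_{t,t'}=\vert R\cap W(t,t')\vert$ and then union bound over the finitely many pairs $(t,t')\in (S\sqcup S^{-1})^{2}$ (recall that $n$ is fixed). A short spectral analysis of the non-backtracking transition matrix $M$ on $S\sqcup S^{-1}$, written as $M=J-P$ with $J$ the all-ones matrix and $P$ the permutation matrix of the involution $s\leftrightarrow s^{-1}$, shows that the eigenvalues of $M$ are $2n-1$ (multiplicity $1$) and $\pm 1$ (with multiplicities $n$ and $n-1$ respectively). Hence the number of reduced words of length $l$ starting with $t$ and ending with $t'$ equals $(M^{l-1})_{t,t'}=(2n-1)^{l-1}\slash(2n)+O(1)$, and consequently
$$E[X_{t,t'}]=2(2n-1)^{ld}(2n)^{-2}+O\bigl((2n-1)^{l(d-1)}\bigr),$$
so since $d<1$ the bias is $o_{l}(1)$, negligible compared to the allowed deviation $(2n-1)^{3ld\slash 4}$.

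Since $R$ is chosen uniformly without replacement from the set of reduced words of length $l$, the variable $X_{t,t'}$ has a hypergeometric distribution, whose variance is at most $E[X_{t,t'}]$, so $\sigma_{t,t'}^{2}=O((2n-1)^{ld})$. Chebyshev's inequality with $\kappa\sigma_{t,t'}=(2n-1)^{3ld\slash 4}\slash 2$ gives $\kappa^{-2}=O((2n-1)^{-ld\slash 2})$, hence
$$P\bigl(\vert X_{t,t'}-E[X_{t,t'}]\vert\geq(2n-1)^{3ld\slash 4}\slash 2\bigr)=O\bigl((2n-1)^{-ld\slash 2}\bigr)=o_{l}(1).$$
Combined with the negligible bias via the triangle inequality, a union bound over the $O(n^{2})$ pairs $(t,t')$ yields $P(A^{c})=o_{l}(1)$. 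The $\Omega(n,l,d)$ case is handled identically by applying the argument separately to the independently chosen sets $R_{l}$ and $R_{l+1}$ of lengths $l$ and $l+1$.

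The argument is a routine concentration--and--conditioning setup and no serious obstacle arises; the only point requiring care is verifying that the exponential error in approximating $E[X_{t,t'}]$ by $2(2n-1)^{ld}(2n)^{-2}$ is negligible compared to $(2n-1)^{3ld\slash 4}$, which is immediate from the hypothesis $d<1\slash 2<1$.
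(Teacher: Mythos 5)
Your proof is correct and follows essentially the same route as the paper: apply Chebyshev to each $X_{t,t'}$ to show the conditioning event has probability $1-o_{l}(1)$, then combine with Theorem \ref{thm: vKd reduced for random groups} via the elementary bound $P(B\mid A)\geq 1-P(B^{c})\slash P(A)$ (the paper phrases this as intersecting two subsets of density $1-o_{l}(1)$). Your transfer-matrix computation verifying that $E[X_{t,t'}]$ differs from $2(2n-1)^{ld}(2n)^{-2}$ by only $O\bigl((2n-1)^{l(d-1)}\bigr)$ supplies a detail the paper elides, but the argument is otherwise the same.
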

\begin{proof}
 We prove this for $\Theta(n,l,d)$ only, since the proof is similar for $\Omega(n,l,d)$. First, let $R_{l}$ be a uniformly random choice of set of $2(2n-1)^{ld}$ reduced words of length $l$ in $F_{n}$, so that $\langle S\vert R_{l}\rangle \sim G(n,l,d)$. Fix $t,t^{-1}$.  Define the random variable $X_{t,t'}=\vert R_{l}\cap W(t,t')\vert.$ Then $X_{t,t'}$ has expectation $\mu=2(2n-1)^{ld}(2n)^{-2}$ and variance $\sigma^{2}=O_{l}(\mu).$ Therefore, taking $\kappa=\kappa(l)=\mu^{1\slash 100}$, by applying the Chebyshev inequality, we see that  $P(\vert X_{t,t'}-\mu\vert\geq \kappa\sigma)\leq \kappa^{-2}$. In particular $$\vert X_{t,t'}-\mu\vert \geq k\sigma=O_{l}(\mu^{51\slash 100})$$ with probability less than or equal to $\mu^{-2\slash 100}.$ Therefore
$$  \bigg\vert X_{t,t'}-2(2n-1)^{ld}(2n)^{-2}\bigg\vert\leq (2n-1)^{3ld\slash 4}$$ with probability tending to $1$ as $l$ tends to infinity. Taking the finite intersection of events across all pairs $t,t'$, it follows that for all $t,t'$ w.o.p. $$\bigg\vert\vert R_{l}\cap W(t,t')\vert-2(2n-1)^{ld}(2n)^{-2}\bigg\vert\leq (2n-1)^{3ld\slash 4}.$$

In particular, we have shown that the set of group presentations obtainable by sampling from $\Theta (n,l,d)$ conditioned on $$\bigg\vert\vert R\cap W(t,t')\vert-2(2n-1)^{ld}(2n)^{-2}\bigg\vert\leq (2n-1)^{3ld\slash 4}$$ for all $t,t'$ is of density $1-o_{l}(1)$ in the set of group presentations obtainable by sampling from $\Theta (n,l,d)$. 

By Theorem \ref{thm: vKd reduced for random groups}, we have that the set of group presentations obtainable by sampling from $\Theta (n,l,d)$ conditioned on the group being infinite and diagramatically reducible is also of density $1-o_{l}(1)$ in the set of group presentations obtainable by sampling from $\Theta (n,l,d)$. 

Taking the intersection of two subsets of density $1-o_{l}(1)$ also gives a subset of density $1-o_{l}(1)$, so the conclusion of the Lemma holds. In particular, given $G=\langle S\vert R_{l}\rangle \sim \Theta (n,l,d)$, conditioned on $$\bigg\vert\vert R_{l'}\cap W(t,t')\vert-2(2n-1)^{ld}(2n)^{-2}\bigg\vert\leq (2n-1)^{3d\slash 4}$$ for all $t,t'$, we have that $G$ is w.o.p. infinite diagramatically reducible.
\end{proof}
\begin{lemma}\label{lem: equivalence of models}
    Fix $d<1\slash 2$ and let $H=\langle S\vert R\rangle \sim \overline{\Theta}(n,l,d)$ or $H\sim \overline{\Omega}(n,l,d)$. Then w.o.p. $H$ is infinite and diagramatically reducible.
\end{lemma}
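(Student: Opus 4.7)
The plan is to directly adapt Ollivier's argument for Theorem~\ref{thm: vKd reduced for random groups} to the conditioned models $\overline{\Theta}$ and $\overline{\Omega}$, rather than attempt to transfer the conclusion of Lemma~\ref{lem: asphericity of model condition} by a measure-theoretic argument. The latter appears infeasible because the exact-count event conditioned on in $\overline{\Theta}$ has probability exponentially small in $l$ compared to the approximate event used in Lemma~\ref{lem: asphericity of model condition}, so a Bayes-type argument would have to beat this exponential penalty with an exponentially sharper bound on the bad event, which Ollivier's proof does not obviously supply.

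First I would record that $\overline{\Theta}(n, l, d)$ admits the following product description: independently for each pair $(t, t') \in (S \sqcup S^{-1})^2$, choose a uniformly random subset $M^{(t,t')} \subseteq W(t, t')$ of size $2(2n-1)^{ld}(2n)^{-2}$, and set $R_{l} = \bigsqcup_{(t,t')} M^{(t,t')}$. An analogous description applies to $\overline{\Omega}$, with independent uniform samples of the prescribed sizes at lengths $l$ and $l+1$. Thus $\overline{\Theta}$ is a stratified-sampling version of $\Theta$ in which the bucket marginals are deterministic rather than multinomially random.

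Next I would verify that the two inputs to Ollivier's union-bound argument survive this stratification. First, for any reduced word $w \in W(t, t')$ of length $l$, the probability $P(w \in R_{l}) = 2(2n-1)^{ld}(2n)^{-2}/|W(t, t')|$ is of the same order as $(2n-1)^{-(1-d)l}$, since $|W(t, t')|$ is of order $(2n-1)^{l-1}/(2n)$ for $l$ large; this matches the per-relator probability under $\Theta$ up to a bounded multiplicative constant depending only on $n$. Second, the events $\{w \in R_{l}\}$ for words $w$ in distinct buckets are independent, and within a bucket they are hypergeometric, so any family of $k$ relators with $k$ at most polynomial in $l$ has joint occurrence probability bounded by the naive product up to a bounded factor, the bucket sizes being exponentially large in $l$. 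Feeding these two facts into the proof of Theorem~\ref{thm: vKd reduced for random groups} then yields that $H$ is w.o.p.\ infinite and diagrammatically reducible, and the case of $\overline{\Omega}$ is identical using independence of the length-$l$ and length-$(l+1)$ samples.

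The main obstacle is confirming that the constant factors introduced by stratification do not accumulate in Ollivier's union bound over bad spherical diagrams in a way that disrupts the exponential decay in $l$. Since the per-relator probabilities and the joint independence structure are perturbed only by $l$-independent constants, and Ollivier's bound enjoys genuine exponential decay in the number of faces (and hence effectively in $l$ via the isoperimetric inequality at densities below $1/2$), the union bound should go through unchanged up to these constants, so this last step is primarily bookkeeping rather than a new combinatorial difficulty.
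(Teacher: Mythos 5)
Your route is genuinely different from the paper's, and your opening dismissal of the ``transfer the conclusion of Lemma~\ref{lem: asphericity of model condition}'' strategy misses the actual argument. The paper does not attempt a Bayes-type comparison of the exact-count event with the approximate-count event; it uses a monotone coupling. Starting from $R_{l}$ with the exact bucket counts, one picks $d<d'<1/2$ and extends $R_{l}$ to a uniformly distributed superset $R_{l}'$ satisfying the approximate-count condition at density $d'$, so that $\langle S\vert R_{l}'\rangle$ falls under Lemma~\ref{lem: asphericity of model condition} and is w.o.p.\ infinite and diagramatically reducible. Since both properties are inherited by sub-presentations (removing relators), the same holds for $H=\langle S\vert R_{l}\rangle$. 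This entirely avoids re-examining Ollivier's proof, which is the whole point of setting up the models $\overline{\Theta}$ and $\overline{\Omega}$ the way the paper does.

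Your alternative --- rerunning the union bound of Theorem~\ref{thm: vKd reduced for random groups} directly in the stratified model --- is plausible but the step you label ``bookkeeping'' is the actual content, and the two facts you feed in are not obviously sufficient. Your second fact controls the probability that a prescribed tuple of \emph{words} all land in $R_{l}$ (the without-replacement correction), but Ollivier's estimate also needs the probability that a tuple of \emph{random} relators fulfils a given abstract diagram, and stratification changes the law of each individual relator: it is now uniform on $W(t,t')$ rather than on all reduced words of length $l$. The fulfilment probability is computed letter by letter, and a uniform word with prescribed first and last letters is a Markov bridge whose transition probabilities differ from the unconditioned chain by factors $1+O(\theta^{j})$ ($\theta<1$, $j$ the distance to the endpoint). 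One must verify that these corrections multiply out to a constant \emph{per relator} rather than a constant \emph{per letter}; a per-letter constant would accumulate to an exponential-in-$l$ factor and destroy the union bound. This can be done, but it is a genuine verification inside Ollivier's argument that your proposal does not supply, whereas the paper's coupling needs nothing beyond Lemma~\ref{lem: asphericity of model condition} and the monotonicity of diagrammatic reducibility under deleting relators.
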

\begin{proof}
 We prove this for $\overline{\Theta} (n,l,d)$ only; the proof is similar for $\overline{\Omega}(n,l,d)$.

Let $R_{l}$ be a randomly chosen set of reduced words of length $l$ in $F_{n}$, conditioned on $\vert R_{l}\cap W(t,t')\vert=(2n-1)^{ld}(2n)^{-2}$ for all $t,t'$. Let $d<d'<1\slash 2$, and let $R_{l}\subseteq R_{l}'$ be a randomly chosen set of reduced words of length $l$ in $F_{n}$, conditioned on $$\bigg\vert\vert R_{l}'\cap W(t,t')\vert-2(2n-1)^{ld'}(2n)^{-2}\bigg\vert\leq (2n-1)^{3d'\slash 4}$$ for all $t,t'$. Note that the distribution of $R_{l}'$ is uniform, so that $\langle S\vert R_{l}'\rangle\sim \Theta (n,l,d')$, conditioned on $$\bigg\vert\vert R_{l}'\cap W(t,t')\vert-2(2n-1)^{ld'}(2n)^{-2}\bigg\vert\leq (2n-1)^{3d'\slash 4}$$
for all $t,t'$. By Lemma \ref{lem: asphericity of model condition}, the group $\langle S\vert R_{l}'\rangle$ is w.o.p. infinite and diagramatically reducible . Being infinite and diagramatically reducible is preserved by removing relators, and so $H=\langle S\vert R_{l}\rangle$ is w.o.p. infinite and diagramatically reducible.
\end{proof}

We now note a result on small cancellation in random groups: we refer the reader to \cite[p. 240]{LyndonCGT} for the definition of the $C'(\lambda)$ condition.
\begin{lemma}\label{lem: C'(1/2) condition for density model}\cite[Corollary 3]{ollivier2007some}
    Let $d<1\slash 2$ and let $G\sim G(n,l,d)$. Then w.o.p. $G$ is $C'(2d).$
\end{lemma}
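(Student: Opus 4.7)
The plan is a standard first-moment / union bound argument, which is the workhorse for "piece" estimates in random groups. Fix $k=\lceil 2dl\rceil+1$; it suffices to show that with overwhelming probability no two distinct relators in $R$ (together with all their cyclic shifts and inverses) share a common subword of length $k$. Under the convention stated in the introduction of absorbing integer-part corrections, this yields the $C'(2d)$ condition.

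The key ingredient is a single-word collision estimate. Using the asymptotic that the number of cyclically reduced words of length $l$ in $F_{n}$ is $(2n-1)^{l}(1+o_{l}(1))$ (a Perron--Frobenius / transfer-matrix calculation on the graph of admissible letter transitions), one shows that for any fixed word $w$ of length $k$ and any specified position, the probability that a uniformly random cyclically reduced word of length $l$ contains $w$ as a subword at that position is $\Theta((2n-1)^{-k})$. Summing over the $O(l)$ valid starting positions (including cyclic shifts and passage to the inverse) gives a bound of $O(l(2n-1)^{-k})$ that a given $w$ appears as a piece of a single relator.

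Hence the probability that two specified relators both contain a fixed $w$ is $O(l^{2}(2n-1)^{-2k})$. Taking the union bound over the $\leq(2n-1)^{k}$ choices of $w$ and the $\binom{|R|}{2}\leq (2n-1)^{2ld}$ pairs of relators, the expected number of violating configurations is
$$O\bigl(l^{2}(2n-1)^{2ld-k}\bigr).$$
Since $k>2dl$, this is $o_{l}(1)$, so by Markov's inequality the probability that any common piece of length $k$ exists tends to $0$, which is the desired w.o.p. statement.

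The main obstacle is verifying that the cyclic-reducedness constraint only introduces an $O(1)$ multiplicative correction into the single-word probability estimate, particularly when $w$ straddles the wrap-around point of the cyclic word $r$ (so that the reducedness condition couples the last and first letters). This is routine but does require care: one partitions into cases by whether the position of $w$ lies entirely in an "interior" window of $r$ or crosses the boundary, and in each case invokes the transfer-matrix count to bound the relative fraction of completions. Once this technical estimate is in hand, the rest of the proof is a direct counting exercise.
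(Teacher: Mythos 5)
The paper does not actually prove this lemma: it is quoted verbatim from Ollivier, where it arises as a corollary of the isoperimetric inequality for van Kampen diagrams (glue two $2$-cells along a putative piece $p$ to get a diagram $D$ with $|\partial D|=2l-2|p|$, and apply $|\partial D|\geq(1-2d-\varepsilon)l|D|$). Your direct first-moment count is the other standard route and is sound in outline, but the final arithmetic fails. With $k=\lceil 2dl\rceil+1$ you have $2ld-k\geq -2$, so your bound $O\bigl(l^{2}(2n-1)^{2ld-k}\bigr)$ is $\Omega\bigl(l^{2}(2n-1)^{-2}\bigr)$, which tends to infinity with $l$; Markov's inequality then gives nothing. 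The polynomial factor $l^{2}$ coming from the choice of positions is not absorbed by adding $O(1)$ to $k$. To make the union bound close you need $k\geq 2dl+C\log_{2n-1}l$ for some $C>2$, or more cleanly $k=(2d+\varepsilon)l$ for a fixed $\varepsilon>0$. The latter proves $C'(2d+\varepsilon)$ for every $\varepsilon>0$, which is in fact the honest form of the cited result (the bare $C'(2d)$ in the statement is a slight abuse), and it is all the paper needs: in the application the forbidden pieces have length $l/2$, and $l/2>(2d+\varepsilon)l$ for small $\varepsilon$ since $d<1/4$ strictly.

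Two smaller points. First, your union bound runs only over unordered pairs of \emph{distinct} relators, but the $C'$ condition also forbids long pieces arising from a single relator matched against its own cyclic shifts or its inverse; this needs a separate (easy, but not identical) estimate, since the two occurrences of the subword are then not independent and the event forces an approximate periodicity of the relator. Second, your reduction of the single-word collision probability to the transfer-matrix count of cyclically reduced words, including the wrap-around case, is fine and is indeed where the $\Theta((2n-1)^{-k})$ per-position estimate comes from; that part of the argument I have no quarrel with.
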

We now analyse the group $\mathcal{H}(G)$.
\begin{lemma}\label{lem: halved group is also random}
    Let $d<1\slash 4$ and $\delta< 1\slash 4-d.$ Let $G\sim G(n,l,d)$, and $\mathcal{H}(G)=\langle S\vert R\rangle$. Then w.o.p. there exists a set $R\subseteq R'$ such that $\langle S\vert R'\rangle \sim \overline{\Theta}(n,l\slash 2, 2d+\delta)$ if $l$ is even, and $\langle S\vert R'\rangle\sim \overline{\Omega}(n,(l-1)\slash 2, 2d+\delta )$ if $l$ is odd.
\end{lemma}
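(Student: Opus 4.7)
The plan is to construct a coupling of $G\sim G(n,l,d)$ with $R'\sim \overline{\Theta}(n,l/2,2d+\delta)$ (respectively $\overline{\Omega}(n,(l-1)/2,2d+\delta)$ for $l$ odd) such that the set $R$ of relator-halves of $\mathcal{H}(G)$ satisfies $R\subseteq R'$ with overwhelming probability. We treat the case $l$ even in detail; the odd case is analogous, working with the two block lengths $(l-1)/2$ and $(l+1)/2$ in place of $l/2$. Set $M=(2n-1)^{ld}$, $M'=(2n-1)^{(l/2)(2d+\delta)}$, and $K=2M'(2n)^{-2}$ for the target count of $R'$ in each $W(t,t')$.

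The first step is to control sizes. By Lemma \ref{lem: C'(1/2) condition for density model}, $G$ is $C'(2d)$ w.o.p.; since $d<1/4$ we have $2d<1/2$, so no two relators of $G$ can share a subword of length $l/2$ and w.o.p. $R$ consists of $2M$ distinct reduced words of length $l/2$. For each admissible pair $(t,t')$, the count $X_{t,t'}=|R\cap W(t,t')|$ has mean close to $2M(2n)^{-2}$ and variance $O_{l}(M)$, since each half $r_{i}$ or $r_{i}'$ is nearly uniformly distributed among reduced words of length $l/2$. A Chebyshev estimate mirroring the one in the proof of Lemma \ref{lem: asphericity of model condition} yields $X_{t,t'}<K$ w.o.p., exploiting the slack $M'/M=(2n-1)^{l\delta/2}\to\infty$.

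On this good event, we build $R'$ by independently adjoining to each block $R\cap W(t,t')$ a uniformly random $(K-X_{t,t'})$-subset of $W(t,t')\setminus R$, producing a set $R'\supseteq R$ with $|R'\cap W(t,t')|=K$ for every admissible pair. By construction $R\subseteq R'$, so it remains only to verify that the marginal distribution of $R'$ is $\overline{\Theta}(n,l/2,2d+\delta)$.

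The key input---and the main obstacle---is a symmetry argument. The distribution of the relator set of $G\sim G(n,l,d)$ is invariant under the natural action of the product group $\prod_{(t,t')}\mathrm{Sym}(W(t,t'))$ that permutes words within each block $W(t,t')$, because any such permutation preserves the first and last letter of every half and therefore the cyclic reducedness of every relator. Invariance and orbit-counting then imply that, conditional on the sizes $(X_{t,t'})$, the blocks $R\cap W(t,t')$ are jointly distributed as independent uniform size-$X_{t,t'}$ subsets of their respective blocks. A short hypergeometric calculation now shows that filling each block independently and uniformly up to size $K$ produces independent uniform size-$K$ subsets across blocks---precisely the law $\overline{\Theta}(n,l/2,2d+\delta)$ viewed as $\Theta$ conditioned on exact block counts. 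The chief care needed is in setting up the symmetry action on cyclically reduced words cleanly, and in checking that the good event of the first step occurs with overwhelming probability so that the conditioning and extension are well-defined throughout.
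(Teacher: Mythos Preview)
Your proof is correct and follows the same route as the paper: use the $C'(2d)$ condition to ensure the halves are distinct, apply Chebyshev to bound each block count $|R\cap W(t,t')|$ below the target $K$, and then extend $R$ uniformly within each block to obtain $R'$. You are in fact more careful than the paper on one point---the paper simply asserts that the resulting $R'$ has law $\overline{\Theta}$, whereas your symmetry argument (invariance of the law of $T$ under $\prod_{(t,t')}\mathrm{Sym}(W(t,t'))$, which fixes first and last letters and hence preserves cyclic reducedness) supplies the justification that, conditional on the block sizes, $R$ is already uniform in each block, so that the uniform top-up yields exactly the $\overline{\Theta}$ distribution.
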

\begin{proof}
    We prove the first case only. Let $G=\langle S\vert  T\rangle$ and $\mathcal{H}(G)=\langle S\vert  R\rangle$. By Lemma \ref{lem: C'(1/2) condition for density model}, w.o.p. $R$ consists of exactly $2(2n-1)^{ld}=2(2n-1)^{2d(l\slash 2)}$ reduced words of length $l\slash 2$. By the Chebyshev inequality, w.o.p. for each $t, t'^{-1}$, $$\bigg\vert\vert R\cap W(t,t')\vert-2(2n-1)^{ld}(2n)^{-2}\bigg\vert\leq (2n-1)^{3ld\slash 4}$$ so that w.o.p. for each $t, t'^{-1}$, $$\vert R\cap W(t,t')\vert\leq (2n-1)^{l(2d+\delta)\slash 2}(2n)^{-2}.$$
    
    We may therefore choose uniformly at random a set $R\subseteq R'$ with $\vert R'\cap W(t,t')\vert=2(2n-1)^{l(2d+\delta)\slash 2}(2n)^{-2}$ for all $t,t'$, so that $\langle S\vert R'\rangle\sim \overline{\Theta}(n,l\slash 2, 2d+\delta).$
\end{proof}
Using these results, we may immediately deduce Lemma \ref{lem: random groups are well separated}.
\begin{lemma}\label{lem: random groups are well separated}
Let $G$ be a random group at density less than $1\slash 4.$ Then, with overwhelming probability, $G$ is aspherically relator separated. 
\end{lemma}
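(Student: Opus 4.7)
The plan is to verify the three conditions in the definition of aspherically relator separated: (i) the original presentation $G = \langle S \mid T\rangle$ is aspherical, (ii) the halved presentation $\mathcal{H}(G) = \langle S \mid R\rangle$ is aspherical, and (iii) $\mathcal{H}(G)$ is infinite. Condition (i) follows directly from Theorem \ref{thm: vKd reduced for random groups}: a random group at density $d < 1/4 < 1/2$ is w.o.p. infinite and diagramatically reducible, hence aspherical. The real work is in (ii) and (iii), and the strategy is to reduce these to the already-established results about the auxiliary models $\overline{\Theta}$ and $\overline{\Omega}$.

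The key input is Lemma \ref{lem: halved group is also random}: for $\delta < 1/4 - d$, there exists w.o.p. a superset $R \subseteq R'$ such that $\langle S \mid R'\rangle$ is sampled from $\overline{\Theta}(n, l/2, 2d+\delta)$ (if $l$ is even) or $\overline{\Omega}(n, (l-1)/2, 2d+\delta)$ (if $l$ is odd). Since $d < 1/4$ and $\delta$ can be chosen so that $2d + \delta < 1/2$, Lemma \ref{lem: equivalence of models} applies and yields that $\langle S \mid R'\rangle$ is w.o.p. infinite and diagramatically reducible.

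From this we extract conditions (ii) and (iii). For condition (iii): $\mathcal{H}(G) = \langle S \mid R\rangle$ surjects onto $\langle S \mid R'\rangle$ (since adding relators can only further quotient the group), and the latter is w.o.p. infinite, so $\mathcal{H}(G)$ is w.o.p. infinite. For condition (ii): the text notes explicitly that being diagramatically reducible is preserved by \emph{removing} relators, so since $R \subseteq R'$ and $\langle S \mid R'\rangle$ is w.o.p. diagramatically reducible, the presentation $\langle S \mid R\rangle = \mathcal{H}(G)$ is also w.o.p. diagramatically reducible, and hence aspherical by \cite[Remark 3.2]{gersten1987reducible}.

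Intersecting the finitely many w.o.p.\ events above gives a w.o.p.\ event on which all three conditions in Definition \ref{def: aspherically separated} hold, completing the proof. The main conceptual obstacle was already absorbed into the setup in Section 3: one cannot directly treat $\mathcal{H}(G)$ as a random group in the density model because the words $r_i, r_i'$ are correlated (they must concatenate to a chosen relator with prescribed first/last letters), which is precisely why the auxiliary models $\overline{\Theta}$ and $\overline{\Omega}$ were introduced and Lemma \ref{lem: halved group is also random} was proved. With those tools in hand, the present lemma is a short assembly.
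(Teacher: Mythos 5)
Your proposal is correct and follows essentially the same route as the paper's own proof: asphericity of $G$ via Theorem \ref{thm: vKd reduced for random groups}, and asphericity plus infiniteness of $\mathcal{H}(G)$ by embedding $R$ into a larger set $R'$ via Lemma \ref{lem: halved group is also random}, applying Lemma \ref{lem: equivalence of models}, and using that infiniteness and diagrammatic reducibility are preserved under removing relators. The only cosmetic difference is that you phrase the infiniteness step as a surjection $\langle S\mid R\rangle\twoheadrightarrow\langle S\mid R'\rangle$, which is the same observation the paper makes.
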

\begin{proof}[Proof of Lemma \ref{lem: random groups are well separated}]
Let $d<1\slash 4$, and $G\sim G(n,l,d)$. Then
 $G$ is w.o.p. infinite and diagramatically reducible by Theorem \ref{thm: vKd reduced for random groups}, and therefore infinite and aspherical w.o.p. by \cite[Remark 3.2]{gersten1987reducible}. If $\mathcal{H}(G)=\langle S\vert R\rangle $ then w.o.p. there exists a set $R'$ such that $R\subseteq R'$ and $\langle S\vert R'\rangle $ is infinite and diagramatically reducible by Lemma \ref{lem: equivalence of models} and Lemma \ref{lem: halved group is also random}. Since both of these properties are preserved by removing relators, it follows that $\mathcal{H}(G)$ is w.o.p. infinite and diagramatically reducible, hence infinite and aspherical w.o.p. by \cite[Remark 3.2]{gersten1987reducible}.
\end{proof}
\subsection{Extension to the \texorpdfstring{$\boldsymbol{k}$}{k}-angular model}
The proof of Theorem \ref{thmalph: lack of (T) in $k$-angular model} follows an identical method to the above proof of Theorem \ref{thmalph: no property t in random groups}, obtained by simply swapping results in the density model for the analogous results in the $k$-angular model. We list the replacements below.

The following theorem, which we use in place of Theorem \ref{thm: vKd reduced for random groups}, was originally only proved for $G(n,l,d)$. However, the proof strategy of \cite[Theorem 3.11]{ashcroftrandom} can be extended to the models $\Theta (n,l,d)$ and $\Omega(n,l,d)$ with little alteration. We therefore omit the details of the proof of the following, and simply cite the result.
\begin{theorem}\cite[Theorem 3.11]{ashcroftrandom}
Let $k\geq2$, $d<1\slash 2$, and let $G\sim G(n,k,d),\;G\sim\Theta (n,k,d),\;$or $G\sim\Omega(n,k,d)$. Then $w.o.p.(n)$ $G$ is infinite and diagramatically reducible.
\end{theorem}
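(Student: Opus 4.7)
The plan is to adapt the first-moment argument that the cited Theorem~3.11 of \cite{ashcroftrandom} uses for $G(n,k,d)$ so that it applies verbatim (up to constants) to $\Theta(n,k,d)$ and $\Omega(n,k,d)$.

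First I would recall the skeleton of the proof for $G(n,k,d)$. Fix an abstract reduced spherical diagram $D$ with $M\geq 2$ 2-cells (the case $M=1$ is excluded because w.o.p.(n) no random relator is a proper power). Since each 2-cell has boundary length $k$, Euler's formula for the sphere pins down the number $V(D)$ of vertices of $D$ as a linear function of $M$. A legal labelling of $D$ by a random set $R\subseteq F_n$ of cardinality $(2n-1)^{kd}$ is determined by at most $(2n)^{V(D)}$ vertex-labelings, subject to the constraint that the word read around each 2-cell lies in $R$; for a uniform $R$ the probability of this last event is at most a constant times $\bigl(|R|/(2n-1)^{k}\bigr)^{M}=(2n-1)^{Mk(d-1)}$. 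The number of combinatorial types of such $D$ with $M$ cells is at most $c(k)^{M}$. Multiplying and summing gives an expected count bounded by a geometric series in $M$ with ratio a negative power of $n$ when $d<1/2$, so by Markov's inequality $G$ is w.o.p.(n) diagrammatically reducible. Infiniteness is obtained from the same local isoperimetric input: the relator set satisfies a $C'(2d)$-type small cancellation condition w.o.p.(n), which gives a linear lower bound on the boundary of reduced disc diagrams and thus non-elementary hyperbolicity.

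Next I would transfer this to $\Theta(n,k,d)$. Here the relator set is a uniformly chosen collection of $2(2n-1)^{kd}$ reduced words of length $k$, so the per-cell probability $|R|/(2n-1)^{k}$ changes only by a factor of $2$. The first-moment bound picks up a multiplicative factor of $2^{M}$, which is absorbed because the per-cell exponent remains strictly negative for $d<1/2$ once $n$ is large; neither the Euler-characteristic bookkeeping nor the enumeration of combinatorial types is affected. For $\Omega(n,k,d)$ the relator set is the union of $(2n-1)^{kd}$ words of length $k$ with $(2n-1)^{kd}$ words of length $k+1$. I would decorate each 2-cell of the abstract diagram by its type in $\{k,k+1\}$ indicating the length of the relator it bears; this introduces at most $2^{M}$ further combinatorial choices. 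The Euler-characteristic computation and the per-cell probability estimate proceed cell-by-cell with $k$ replaced by the cell-type length, and the resulting exponent is a convex combination of $k(d-1/2)-1$ and $(k+1)(d-1/2)-1$, both strictly negative for $d<1/2$. The total expected count is therefore still $o_{n}(1)$, yielding diagrammatic reducibility w.o.p.(n), and infiniteness follows from the analogous $C'(2d)$ small-cancellation statement for $\Omega$.

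The main obstacle is keeping the Euler-characteristic bookkeeping uniform across the three models when cells of different lengths coexist (as in $\Omega$) and when the relator set has been inflated by a constant factor (as in $\Theta$); one must check that the stratification by cell-type in $\Omega$ does not cause a catastrophic blow-up in the number of abstract diagrams of fixed face-count. Since in every model the extra combinatorial multiplicities are $O(c^{M})$ while the per-cell probability decays as a strictly negative power of $n$ as soon as $d<1/2$, these extra factors are absorbed into the geometric series, and the proof of the cited theorem for $G(n,k,d)$ goes through with essentially no further change.
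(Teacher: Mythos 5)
The paper offers no proof of this statement at all --- it cites \cite[Theorem 3.11]{ashcroftrandom} and asserts that its first-moment count over reduced diagrams ``can be extended to the models $\Theta(n,k,d)$ and $\Omega(n,k,d)$ with little alteration'' --- and the extension you describe (absorbing the factor $2^{M}$ from the doubled relator set for $\Theta$, and the $2^{M}$ cell-type decorations plus mixed-length Euler bookkeeping for $\Omega$, into the geometric series) is exactly that intended routine adaptation, so your approach matches the paper's. One caveat: infiniteness does not follow from the $C'(2d)$ condition, which lies outside the classical small-cancellation range once $d\geq 1\slash 12$ (and is used in the paper only to control the cardinality of the relator set); it follows instead from the isoperimetric inequality $\vert\partial D\vert\geq(1-2d-\epsilon)k\vert D\vert$ for reduced disc diagrams, which is produced by the same first-moment machinery you have already set up, so the repair is immediate but should be stated correctly.
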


Next, we use the following in place of Lemma \ref{lem: C'(1/2) condition for density model}, which follows by an application of the isoperimetric inequality of random $k$-angular groups, due to Odrzyg{\'o}{\'z}d{\'z}.
\begin{lemma}\cite[Theorem 2.6]{odrzygozdz2019bent}
    Let $d<1\slash 2$ and let $G\sim G(n,k,d)$. Then w.o.p.(n) $G$ is $C'(2d).$
\end{lemma}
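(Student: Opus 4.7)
The plan is a standard first-moment union bound. Write $W_k$ for the set of cyclically reduced words of length $k$ in $F_n$, so $|W_k|=\Theta(n^k)$, and recall that $G\sim G(n,k,d)$ is presented by a uniformly chosen set $R\subseteq W_k$ of size $(2n-1)^{kd}$. A violation of $C'(2d)$ is a \emph{piece}: a common subword of length at least $\ell:=\lceil 2dk\rceil$ appearing in two distinct cyclic conjugates of elements of $R\cup R^{-1}$. The aim is to bound the expected number of such pieces and then apply Markov's inequality.

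The key probability input is the following elementary estimate: for any fixed reduced word $u$ of length $\ell\leq k$, the number of elements of $W_k$ containing $u$ as a factor at a specified position is $\Theta(n^{k-\ell})$, up to easily controlled boundary contributions from the cyclic-reducedness constraint. Consequently, for two independent uniformly random elements $r_1,r_2\in W_k$, the probability that their length-$\ell$ factors at a specified pair of positions (within cyclic conjugates, possibly with an inversion) coincide is $O(n^{-\ell})$.

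Summing this bound over the $O(n^{2kd})$ ordered pairs of distinct relators and the $O(k^2)$ position pairs, and absorbing the negligible contribution of pieces lying inside a single relator at two different cyclic shifts (which contributes only $O(n^{kd})$ additional ordered configurations), the expected number of pieces of length $\ell$ is $O(k^2\, n^{2kd-\ell})=o_n(1)$ since $k$ is fixed as $n\to\infty$ and $\ell>2kd$. Markov's inequality then yields that w.o.p.(n) no piece of length $\ell$ exists, i.e.\ $G$ is $C'(2d)$. The main (mild) technicality is carefully handling the boundary conditions imposed by the cyclic-reducedness requirement in the counting estimate of the previous paragraph; these only introduce bounded multiplicative factors, and everything else is routine union-bounding. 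In fact the claim is merely a specialisation of Odrzyg\'o\'zd\'z's isoperimetric inequality for random $k$-angular groups \cite{odrzygozdz2019bent}, whose small-cancellation corollary is exactly the statement required.
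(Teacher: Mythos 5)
Your overall strategy is sound, and it is a genuinely different route from the paper: the paper offers no proof at all for this lemma, simply citing Odrzyg\'o\'zd\'z's Theorem 2.6, where the $C'$ condition is extracted from the isoperimetric inequality for reduced diagrams in the $k$-angular model. A direct first-moment count of pieces is more elementary and self-contained, and all of its main ingredients are correct: the count $\Theta(n^{k-\ell})$ of cyclically reduced words with a prescribed factor, the $O(n^{-\ell})$ collision probability for a specified pair of positions (including self-overlaps and inversions, where the equality forces $\ell$ letter-constraints whose constraint graph is a union of paths), the $O(n^{2kd})$ pairs and $O(k^2)$ position pairs, and Markov's inequality.

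There is, however, one step that genuinely fails: the claim that $\ell=\lceil 2dk\rceil$ satisfies $\ell>2dk$. In the $k$-angular model $k$ is \emph{fixed}, so for infinitely many admissible parameters $2dk$ is an integer (e.g.\ $k=6$, $d=1/4$), in which case $\ell=2dk$ and your bound on the expected number of offending pieces is $O(k^2\,n^{2kd-\ell})=O(k^2)$, which does not tend to $0$. Since $C'(\lambda)$ in the Lyndon--Schupp convention requires every piece to have length \emph{strictly} less than $\lambda|r|$, a piece of length exactly $2dk$ is a violation, and at these threshold parameters a second-moment computation shows such pieces occur with probability bounded away from $0$; the conclusion therefore cannot be recovered by tightening your estimates. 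The correct statement your argument proves (taking $\ell=\lfloor 2dk\rfloor+1$) is that w.o.p.(n) every piece has length at most $2dk$, i.e.\ $G$ is $C'(\lambda)$ for every $\lambda>2d$. This is all the paper actually uses (it needs only that no two of the half-relators of length $\lfloor k/2\rfloor$ coincide, and $2dk<\lfloor k/2\rfloor$ strictly under the hypothesis $d<\lfloor k/2\rfloor/2k$), but as written your proof of the stated lemma has a gap at the integer thresholds, and the lemma itself should be read with that caveat.
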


 Finally, in place of Lemma \ref{lem: halved group is also random}, we make the following observation.

\begin{remark}
    The main difference between $\mathcal{H}(G)=\langle S\vert R\rangle$ for $G$ a random group in the density model, as opposed to the $k$-angular model, is the following. Let $G\sim (n,k,d)$ for $k$ even. Suppose that $r=r_{1}\hdots r_{k\slash 2}\in R$. Then we must have a word $r'=r'_{1}\hdots r'_{k\slash 2}\in R$ where $r_{1}'\neq r_{k\slash 2}^{-1},\; r'_{k\slash 2}\neq r_{1}^{-1}$. In the case of a fixed number of generators, i.e. where we view $G$ as a random group in the density model (so that $k\rightarrow \infty$) there are approximately $(2n-1)^{k\slash 2}$ choices of such a word $r'$. However, there are $2n(2n-1)^{kd-1}$ reduced words of length $k$ that could be chosen from to obtain a random group in $\Theta(n,k\slash 2,2d)$. In particular, the relator set for $\mathcal{H}(G)$ is not chosen independently.

   Now suppose that we are in the case that $n\rightarrow\infty$, so that we view $G$ as a random group in the $k$-angular model. There are again approximately $(2n-1)^{k\slash 2}$ choice of $r'\in R$ with $rr'\in T$. Since $(2n-1)^{k\slash 2}\slash 2n(2n-1)^{k\slash 2-1}\rightarrow 1$, the relator set of $\mathcal{H}(G)$ is chosen asymptotically independently, so that we can view $\mathcal{H}(G)$ as having the (asymptotic) distribution of $\Theta (n,k\slash 2,2d).$
    
    If $k$ is odd, then we may similarly deduce that $\mathcal{H}(G)$ has the asymptotic distribution of $\Omega(n,(k-1)\slash 2, 2kd\slash (k-1)).$
    \end{remark}
The proof of Theorem \ref{thmalph: lack of (T) in $k$-angular model} now follows similarly to our proof of Theorem \ref{thmalph: no property t in random groups}.
	\bibliographystyle{alpha}
	\bibliography{bib}
	{\sc{DPMMS, Centre for Mathematical Sciences, Wilberforce Road, Cambridge, CB3 0WB, UK}.} \textit{\textsc{\textit{E-mail address}}:}\textsc{ cja59@dpmms.cam.ac.uk}
\end{document}